\documentclass[a4paper,11pt]{amsart}
\usepackage[english]{babel}
\usepackage{amssymb}
\usepackage{amscd}
\usepackage{mathtools}
\usepackage[paper=a4paper,left=3cm,right=3cm,top=2cm,bottom=2.0cm]{geometry}
\usepackage{enumerate}
\usepackage{hyperref}
\usepackage{tikz}
\usepackage{tikz-cd}

\tikzset{
    labl/.style={anchor=north, rotate=90, inner sep=.5mm}
}

\makeatletter
\@namedef{subjclassname@2020}{\textup{2020} Mathematics Subject Classification}
\makeatother

\newtheorem{thm}{Theorem}[section]
\newtheorem*{thm-non}{Theorem}

\newtheorem{lem}[thm]{Lemma}
\newtheorem{prop}[thm]{Proposition}
\newtheorem{cor}[thm]{Corollary}
\theoremstyle{definition}
\newtheorem{defi}[thm]{Definition}
\newtheorem{rem}[thm]{Remark}
\newtheorem{exam}[thm]{Example}

\DeclareMathOperator\Db{D^{b}}
\newcommand\Fo{F^{[n]}}
\newcommand\Fn{F_{[n]}}
\newcommand\Gn{G_{[n]}}
\newcommand\Hn{H_{[n]}}
\newcommand\En{E^{[n]}}
\newcommand\Fmn{F^{[n]}}
\newcommand\Sn{S_{[n]}}
\newcommand\In{\iota^{[n]}}
\newcommand\It{\iota^{\times n}}
\DeclareMathOperator\OO{\mathcal{O}}
\DeclareMathOperator\ZZ{\mathcal{Z}}
\newcommand\Xn{X^{[n]}}
\DeclareMathOperator\Hom{Hom}
\DeclareMathOperator\End{End}
\DeclareMathOperator\Ext{Ext}
\DeclareMathOperator\Coh{Coh}

\DeclareMathOperator\M{M}

\DeclareMathOperator\Fix{Fix}

\DeclareMathOperator\Pic{Pic}
\DeclareMathOperator\NS{NS}

\DeclareMathOperator\supp{supp}

\DeclareMathOperator\Amp{Amp}

\begin{document}

\title{Descent of tautological sheaves from Hilbert schemes to Enriques manifolds}

\author{Fabian Reede}
\address{Institut f\"ur Algebraische Geometrie, Leibniz Universit\"at Hannover, Welfengarten 1, 30167 Hannover, Germany}
\email{reede@math.uni-hannover.de}

\keywords{Enriques manifolds, stable sheaves, moduli spaces, derived categories}
\subjclass[2020]{14F06,14F08,14J28,14D20}

\begin{abstract}
Let $X$ be a K3 surface which doubly covers an Enriques surface $S$. If $n\in\mathbb{N}$ is an odd number, then the Hilbert scheme of $n$-points $\Xn$ admits a natural quotient $\Sn$. This quotient is an Enriques manifold in the sense of Oguiso and Schröer. In this paper we construct slope stable sheaves on $\Sn$ and study some of their properties.
\end{abstract}

\maketitle

\vspace{-.8cm}
\section*{Introduction}
In 1896 Federigo Enriques gave examples of smooth projective surfaces with irregularity $q=0$ and geometric genus $p_g=0$ which are not rational. Therefore these surfaces were counterexamples to a conjecture by Max Noether, which stated that surfaces with $q=p_g=0$ are rational. Nowadays such a surface is called an Enriques surface.

The canonical bundle $\omega_S$ of an Enriques surface $S$ has order two in the Picard group of $S$. The induced double cover turns out to be a K3 surface (a two dimensional hyperk\"ahler manifold), hence it is the universal cover of $S$. On the other hand, every K3 surface $X$ which admits a fixed point free involution doubly covers an Enriques surface $S$.

Mimicking this correspondence Oguiso and Schr\"oer defined higher dimensional analogues of Enriques surfaces, the so called Enriques manifolds in \cite{ogu}. To be precise a connected complex manifold that is not simply connected and whose universal cover is a hyperk\"aler manifold is called an Enriques manifold.

The following class of examples is of interest to us in this work: take an odd natural number $n\in \mathbb{N}$ and an Enriques surface $S$. We have the induced K3 surface $X$ with a fixed point free involution $\iota$ such that $S=X/\iota$. Since $n$ is odd we get an induced fixed point free involution $\In$ on the Hilbert scheme of $n$-points $\Xn$. The quotient of $\Xn$ by the involution $\In$ is an Enriques manifold $\Sn$ of dimension $2n$. We have an \'etale Galois cover $\rho: \Xn \rightarrow \Sn$.

In this article we construct and study stable sheaves on Enriques manifolds of type $\Sn$. The main idea is to start with slope stable sheaves on $\Xn$ and check if they descend to $\Sn$. Known examples of stable sheaves on $\Xn$ are given by the tautological bundles $\En$ associated to slope stable locally free sheaves $E$ on $X$.

For example, we prove that $\En$ descends to $\Sn$ if and only if $E$ descends to $S$. If $\En$ descends we have $\En\cong \rho^{*}\Fn$ for some locally free sheaf $\Fn$ on $\Sn$. We then show that it is possible to find an ample divisor $D\in \Amp(\Sn)$ such that $\Fn$ is slope stable with respect to $D$. Finally using results from Kim \cite{kim} and Yoshioka \cite{yoshi}, we are able to prove that, given certain conditions are satisfied, we have in fact a morphism
\begin{equation*}
(-)_{[n]}: \mathrm{M}_{S,d}(v,L) \rightarrow \mathcal{M}_{\Sn,D}(v_{[n]}),\,\,\,F\mapsto \Fn
\end{equation*}
between a moduli spaces of stable sheaves on $S$ and moduli space of stable sheaves on $\Sn$. This morphism identifies the former moduli space as a smooth connected component in the latter.

This paper consists of four sections. In Section \ref{sec1} we generalize some results concerning tautological bundles on Hilbert schemes of points. Section \ref{sec2} contains results about the descent of tautological sheaves from $\Xn$ to $\Sn$. We compute certain $\Ext$-spaces in Section \ref{sec3}. In the final Section \ref{sec4} we study the stability of sheaves on Enriques manilfolds of type $\Sn$.

\section{Stability of tautological sheaves on Hilbert schemes of points}\label{sec1}

Let $X$ be a smooth projective surface. The Hilbert scheme $\Xn:=\mathrm{Hilb}^n(X)$ classifies length $n$ subschemes in $X$, that is
\begin{equation*}
\Xn=\left\lbrace [Z]\,\lvert\, Z\subset X,\,\, \dim(Z)=0\,\,\text{and}\,\, \dim(\text{H}^0(Z,\OO_Z))=n \right\rbrace.
\end{equation*}
In fact $\Xn$ is smooth itself and has dimension $2n$, see \cite[Theorem 2.4]{fog}. Moreover $\Xn$ is a fine moduli space for the classification of length $n$ subschemes and comes with the universal length $n$ subscheme
\begin{equation*}
\mathcal{Z}=\left\lbrace (x,[Z])\in X\times\Xn\,\lvert\, x\in \supp(Z) \right\rbrace \subset X\times \Xn.
\end{equation*}
The universal subscheme $\mathcal{Z}$ comes with two projections $p: \mathcal{Z}\rightarrow \Xn$ and $q: \mathcal{Z}\rightarrow X$. Note that the morphism $p$ is finite and flat of degree $n$.

To any locally free sheaf $E$ of rank $r$ on $X$ one can associate the so called tautological vector bundle $\En$ on $\Xn$ via
\begin{equation*}
\En:=p_{*}q^{*}E.
\end{equation*}
As $p$ is finite and flat of degree $n$ the sheaf $\En$ is indeed locally free and has rank $nr$. The fiber at $[Z]\in \Xn$ can be computed to be
\begin{equation*}
\En\otimes \OO_{[Z]}\cong \mathrm{H}^0(Z,E_{\lvert Z}).
\end{equation*}

\begin{rem}
Note that the definition of $\En$ makes sense for $E$ a coherent sheaf on $X$ or even a complex $E\in \Db(X)$ in the derived category of $X$, see \cite[Definition 2.4]{krug4}.
\end{rem}

In \cite[Theorem 1.4, Theorem 4.9]{staple} Stapleton proves that if $h\in\Amp(X)$ is an ample divisor on $X$ and $E\not\cong \OO_X$ is a slope stable (with respect to $h$) locally free sheaf, then there is $H\in \Amp(\Xn)$ such that the associated tautological bundle $\En$ is slope stable with respect to $H$ on $\Xn$.

In fact Stapleton's result remains true, if we drop the locally free condition and allow for torsion free sheaves, see for example \cite[Proposition 2.4]{wandel} for a first step toward the following observation:
\begin{lem}
Assume $E$ is torsion free and slope stable with respect to $h\in \Amp(X)$ such that its double dual satisfies $E^{**}\neq \OO_X$, then the associated tautological sheaf $\En$ is slope stable with respect to some $H\in \Amp(\Xn)$.
\end{lem}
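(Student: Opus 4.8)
The plan is to reduce the statement to Stapleton's theorem in the locally free case by passing to the double dual. Put $F:=E^{**}$. Since $X$ is a smooth surface, $F$ is reflexive, hence locally free, and by hypothesis $F\neq\OO_X$. As $E$ is torsion free, the cokernel $T$ of the canonical inclusion $E\hookrightarrow F$ is supported in codimension at least two, so $T$ has finite length; in particular $\rk E=\rk F=r$ and $c_1(E)=c_1(F)$, whence $\mu_h(E)=\mu_h(F)$. A routine argument then shows that $E$ is $h$-slope stable if and only if $F$ is: given a saturated subsheaf $\mathcal{G}\subsetneq F$ (so $\rk\mathcal{G}<r$), the intersection $\mathcal{G}\cap E\subseteq E$ is saturated in $E$ because $E/(\mathcal{G}\cap E)\hookrightarrow F/\mathcal{G}$ is torsion free, and $\mathcal{G}/(\mathcal{G}\cap E)\hookrightarrow T$ is $0$-dimensional, so $c_1(\mathcal{G})=c_1(\mathcal{G}\cap E)$ and $\rk\mathcal{G}=\rk(\mathcal{G}\cap E)$; therefore $\mu_h(\mathcal{G})=\mu_h(\mathcal{G}\cap E)<\mu_h(E)=\mu_h(F)$, and the converse is immediate from $E\subseteq F$. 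Applying Stapleton's theorem to the $h$-slope stable locally free sheaf $F\neq\OO_X$ produces $H\in\Amp(\Xn)$ with $F^{[n]}=p_{*}q^{*}F$ being $H$-slope stable.

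The second step is to compare $\En$ with $F^{[n]}$ by means of a short exact sequence $0\to\En\to F^{[n]}\to T^{[n]}\to 0$ on $\Xn$. For this it suffices that the tautological functor $\mathcal{G}\mapsto p_{*}q^{*}\mathcal{G}$ be exact. Now $p\colon\ZZ\to\Xn$ is finite and flat and $\Xn$ is smooth, so $\ZZ$ is Cohen–Macaulay; and every fibre of $q\colon\ZZ\to X$ has dimension $2n-2=\dim\ZZ-\dim X$, since the fibre over $x$ is $\{[Z]\in\Xn\,\lvert\,x\in\supp(Z)\}$, whose top-dimensional stratum is $\{x\}\times X^{[n-1]}$ (a length-$j$ part at $x$ together with $n-j$ further points contributes $(j-1)+2(n-j)\le 2n-2$). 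By miracle flatness $q$ is flat, hence $q^{*}$ is exact; since $p$ is finite, $p_{*}$ is exact as well. Applying $p_{*}q^{*}$ to $0\to E\to F\to T\to 0$ yields the asserted sequence. Moreover $T^{[n]}=p_{*}q^{*}T$ is supported on $p(q^{-1}(\supp T))$, which has codimension two in $\Xn$; consequently $\En$ is a torsion free subsheaf of the locally free sheaf $F^{[n]}$ with $\rk\En=\rk F^{[n]}=nr$ and $c_1(\En)=c_1(F^{[n]})$.

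It then remains to transfer stability along the inclusion $\En\hookrightarrow F^{[n]}$. Let $\mathcal{G}\subseteq\En$ be a subsheaf with $0<\rk\mathcal{G}<nr$. Regarding $\mathcal{G}$ as a subsheaf of $F^{[n]}$ and using the $H$-slope stability of $F^{[n]}$ together with the equalities $\rk\En=\rk F^{[n]}$ and $c_1(\En)=c_1(F^{[n]})$ from the previous step, one gets $\mu_H(\mathcal{G})<\mu_H(F^{[n]})=\mu_H(\En)$. Hence $\En$ is $H$-slope stable.

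The conceptual content of the lemma is just Stapleton's locally free result applied to $E^{**}$; the step I expect to require the most care is the exactness of the tautological functor — equivalently the flatness of $q$ — together with the observation that $T^{[n]}$ lives in codimension at least two, which is precisely what guarantees that $\En$ and $F^{[n]}$ share the same rank and first Chern class. This refines the first step of \cite[Proposition 2.4]{wandel}.
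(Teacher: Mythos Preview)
Your proof is correct and follows essentially the same route as the paper: pass to the double dual, apply Stapleton in the locally free case, use exactness of $(-)^{[n]}$ to produce the short exact sequence, and observe that $T^{[n]}$ has codimension-two support so that $\En$ and $(E^{**})^{[n]}$ agree in codimension one. The only difference is that you supply direct arguments for two facts the paper outsources: the paper simply asserts that $E^{**}$ is slope stable (a standard fact), and cites Scala \cite[Corollary 6]{scala} for the exactness of the tautological functor, whereas you derive the latter from the flatness of $q$ via miracle flatness (a result the paper invokes elsewhere, citing \cite[Theorem 2.1]{krug2}).
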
 

\begin{proof}
Since $X$ is a smooth projective surface and $E$ is torsion free we can canonically embed $E$ into its double dual. This gives an exact sequence
	\begin{equation}\label{bidual}
		\begin{tikzcd}
			0 \arrow[r] & E \arrow[r] & E^{**} \arrow[r] & Q \arrow[r] & 0.
		\end{tikzcd}
	\end{equation}	 
Here $E^{**}$ is locally free and also slope stable with respect to $h$. Furthermore $Q$ has support of codimension two.

By \cite[Corollary 6]{scala} the functor $\left(-\right)^{[n]}:\Coh(X)\rightarrow\Coh(\Xn)$ is exact. So we get an exact sequence on $\Xn$
 	\begin{equation*}
 		\begin{tikzcd}
 			0 \arrow[r] & \En \arrow[r] & \left( E^{**}\right)^{[n]} \arrow[r] & Q^{[n]} \arrow[r] & 0.
 		\end{tikzcd}
 	\end{equation*}	 
By our assumptions $(E^{**})^{[n]}$ is slope stable with respect to some $H\in \Amp(\Xn)$. But $Q^{[n]}$ has support of codimension two in $\Xn$ so that $\En$ is isomorphic to $(E^{**})^{[n]}$ in codimension one and thus must be also be slope stable with respect to $H$.
\end{proof}

The previous lemma shows that for every slope stable $E$ with $E^{**}\not\cong \OO_X$ there is $H\in \Amp(\Xn)$ such that the tautological sheaf $\En$ is slope stable with respect to $H$. Since $E$ belongs to some moduli space $\mathrm{M}_{X,h}(r,\operatorname{c}_1,\operatorname{c}_2)$, one may ask how $H$ varies if $E$ varies in its moduli. We can answer this question in the case that all sheaves classified by $\mathrm{M}_{X,h}(r,\operatorname{c}_1,\operatorname{c}_2)$ are locally free.

\begin{prop}\label{oneh}
If $(r,\operatorname{c}_1,\operatorname{c}_2)\neq(1,0,0)$ is chosen such that for every $[E]\in \mathrm{M}_{X,h}(r,\operatorname{c}_1,\operatorname{c}_2)$ the sheaf $E$ is slope stable and locally free, then there is $H\in \Amp(\Xn)$ such that $\En$ is slope stable with respect to $H$ for all $[E]\in \mathrm{M}_{X,h}(r,\operatorname{c}_1,\operatorname{c}_2)$.
\end{prop}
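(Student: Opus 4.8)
If $n=1$ then $\Xn=X$, $\En=E$ and $H=h$ works, so assume $n\geq 2$. Recall that $\Pic(\Xn)\cong\Pic(X)\oplus\mathbb{Z}\delta$ with $2\delta$ the boundary divisor, that the nef class $h_{[n]}$ pulled back from the symmetric product $X^{(n)}$ satisfies $h_{[n]}^{2n-1}\cdot\delta=0$, and that $H_t:=h_{[n]}-t\delta$ is ample precisely for $t$ in an interval $(0,\tau)$ with $\tau=\tau(h)>0$ depending only on $(X,h)$. Under our hypotheses every $[E]\in M:=\M_{X,h}(r,\operatorname{c}_1,\operatorname{c}_2)$ has $E$ locally free, $h$-slope-stable and, since $(r,\operatorname{c}_1,\operatorname{c}_2)\neq(1,0,0)$, not isomorphic to $\OO_X$; so the previous Lemma, or directly \cite{staple}, applies to each such $E$. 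As the estimate there is a strict slope inequality valid for all $t$ sufficiently close to $0$, the set $G(E):=\{t\in(0,\tau)\mid\En\text{ is slope-stable with respect to }H_t\}$ contains some interval $(0,\varepsilon(E))$, $\varepsilon(E)>0$. It therefore suffices to prove that $\varepsilon_0:=\inf_{[E]\in M}\varepsilon(E)>0$: for any rational $t\in(0,\varepsilon_0)$, an integral multiple of $H_t$ is then the desired $H$, uniform over $M$.

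The direct route is to note that Stapleton's threshold is purely numerical. Since $\operatorname{ch}(\En)$ does not depend on $[E]\in M$, a Bogomolov-type bound leaves only finitely many numerical types $\xi=\operatorname{c}_1(\mathcal{F})/\rk\mathcal{F}-\operatorname{c}_1(\En)/nr\in\NS(\Xn)_{\mathbb{Q}}$ for a saturated subsheaf $\mathcal{F}\subsetneq\En$ that could destabilize for some $t\in(0,\tau)$. For each such $\xi$ the map $t\mapsto\xi\cdot H_t^{2n-1}$ is a polynomial whose value at $t=0$ is the difference of $h_{[n]}$-slopes of $\mathcal{F}$ and $\En$ (a well-defined number because $h_{[n]}^{2n-1}\cdot\delta=0$) and whose lower-order terms in $t$ involve the $\delta$-components of $\operatorname{c}_1(\mathcal{F})$ and $\operatorname{c}_1(\En)$; Stapleton's argument amounts to showing that, for $E$ $h$-slope-stable with $E\not\cong\OO_X$, these quantities force $\xi\cdot H_t^{2n-1}<0$ for all small $t>0$. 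What remains is to check that this negativity persists on a fixed sub-interval $(0,\varepsilon_0)$ with $\varepsilon_0$ depending only on $(X,h)$ and $(r,\operatorname{c}_1,\operatorname{c}_2)$ — equivalently, that $\varepsilon(E)$ is bounded below in terms of the Mukai vector alone. I expect this step — re-running the proof of \cite{staple} (or of the previous Lemma) with the numerical class, rather than an individual sheaf, as the input — to be the main and essentially the only obstacle.

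A softer packaging, resting however on the same input, avoids reopening that proof. The space $M$ is projective, hence quasi-compact; over an \'etale cover $M'\to M$ carrying a universal sheaf $\mathcal{E}$, base-changing $\mathcal{Z}\subset X\times\Xn$ along $M'$ and pushing $q_{M'}^{*}\mathcal{E}$ forward along the finite flat degree-$n$ map $p_{M'}$ produces a relative tautological bundle $\mathcal{G}$ on $\Xn\times M'$, flat over $M'$ with fibres $\En$. By openness of slope-stability in flat families of torsion-free sheaves, each $U_t:=\{[E]\in M\mid\En\text{ is slope-stable with respect to }H_t\}$ ($t\in(0,\tau)\cap\mathbb{Q}$) is open in $M$, and by the first paragraph the $U_t$ cover $M$. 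If in addition each $G(E)$ is downward closed in $(0,\tau)$, the $U_t$ are nested ($U_t\subseteq U_s$ for $0<s<t$), so $\{U_{1/k}\}_{k\gg 0}$ is an increasing open cover of the quasi-compact $M$ and $U_{1/k_0}=M$ for some $k_0$; an integral multiple of $H_{1/k_0}$ is then the required $H$. But downward-closedness of $G(E)$ is exactly the statement that the finitely many polynomials $\xi\cdot H_t^{2n-1}$ above stay negative once negative — so in the end both routes rest on the numerical analysis underlying \cite{staple}.
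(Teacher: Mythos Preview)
Your outline has the right skeleton --- perturb Stapleton's $h_n$-stability to an ample class and argue the perturbation can be made uniform in $[E]\in M$ because the Chern character of $\En$ is fixed --- but the heart of the matter, the uniform lower bound $\varepsilon_0>0$, is stated as an ``expected obstacle'' rather than proved. Both of your routes explicitly bottom out in the same unproved claim (finiteness of destabilising numerical types, respectively downward-closedness of $G(E)$), so as written the argument is a sketch, not a proof. In particular, invoking a ``Bogomolov-type bound'' on $\Xn$ with respect to the merely nef class $h_n$ is not a step one can take for granted on a $2n$-fold; this is precisely where additional geometric input is needed.

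The paper's proof supplies that input: it records that the Hilbert--Chow morphism $\mathsf{HC}:\Xn\to X^{(n)}$ is \emph{semi-small} and that $q:\mathcal{Z}\to X$ is flat, and then observes that the argument of \cite[Proposition~2.9]{rz} for generalized Kummer varieties goes through verbatim. Semi-smallness is what makes the intersection theory of $h_n$ with the exceptional class behave well enough (via a Hodge--index/de~Cataldo--Migliorini type inequality) to bound the numerical types of potential destabilisers uniformly in the Chern data of $\En$; the flatness of $q$ is what guarantees that this Chern data is indeed constant over $M$. Your write-up never mentions semi-smallness, and without it (or an equivalent substitute) the finiteness step in route~(a) and the monotonicity step in route~(b) remain assertions. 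If you want a self-contained argument, you should either reproduce the relevant part of \cite{rz} or cite it directly, making explicit where semi-smallness enters. A minor point: the paper perturbs along $h_n+\epsilon A$ for an arbitrary ample $A$ rather than along your one-parameter family $h_{[n]}-t\delta$; this is harmless here but matters later in the paper when $A$ must be chosen of the form $\rho^{*}C$.
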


\begin{proof}
By a result of Stapleton, see \cite[Theorem 1.4]{staple}, we know that for $[E]\in \mathrm{M}_{X,h}(r,\operatorname{c}_1,\operatorname{c}_2)$ the locally free sheaf $\En$ is slope stable with respect to the induced nef divisor $h_n\in \NS(\Xn)$. It is also well known that the Hilbert - Chow morphism $\mathsf{HC}: \Xn\rightarrow X^{(n)}$ is semi-small and that $q: \mathcal{Z}\rightarrow X$ is flat, see \cite[Theorem 2.1]{krug2}. 

The proof is now exactly the same as for tautological bundles on the generalized Kummer variety $\mathsf{Kum}_n(A)$ associated to an abelian surface $A$, see \cite[Proposition 2.9]{rz}.
\end{proof}

\begin{rem}
    The condition that all sheaves in $\mathrm{M}_{X,h}(r,\operatorname{c}_1,\operatorname{c}_2)$ are slope stable can be achieved (for example) in the following two different ways: the first is by a special choice of the numerical invariants, see \cite[Lemma 1.2.14]{lehn}. The second way is by choosing a special ample class $h$, see \cite[Theorem 4.C.3]{lehn}. 
    
    To find a moduli space such that all sheaves are locally free, one can do the following: if the tuple $(r,\operatorname{c}_1)$ is fixed, then by Bogomolov's inequality the second Chern class is bounded from below, see \cite[Theorem 3.4.1]{lehn}. Choose the minimal $\operatorname{c}_2$, then every sheaf in $\mathrm{M}_{X,h}(r,\operatorname{c}_1,\operatorname{c}_2)$ is locally free. Indeed, if such an $E$ is not locally free, then $E^{**}$ is locally free, stable with respect to $h$ and has the same tuple $(r,\operatorname{c}_1)$, but it has smaller $\operatorname{c}_2$ by exact sequence \eqref{bidual} as $\operatorname{c}_2(Q)<0$, contradicting minimality. See also \cite[Remark 6.1.9]{lehn} for a similar argument.
\end{rem}

Now let $X$ be a K3 surface. Denote the Mukai vectors of $E$ and $\En$ by $v$ respectively $v^{[n]}\in \mathrm{H}^{*}(\Xn,\mathbb{Q})$. If $\En$ is slope stable, then it belongs to the moduli space $\mathcal{M}_{\Xn,H}(v^{[n]})$ of semistable sheaves on $\Xn$ with Mukai vector $v^{[n]}$. In fact we can generalize \cite[Corollary 4.6]{wandel} to get the following

\begin{thm}\label{compo}
If $v\neq v(\OO_X)$ is a Mukai vector such that for every $[E]\in \mathrm{M}_{X,h}(v)$ the sheaf $E$ is slope stable, locally free and $h^i(X,E)=0$ for $i=1,2$, then the functor $(-)^{[n]}$ induces a morphism
\begin{equation*}
(-)^{[n]}: \M_{X,h}(v) \rightarrow \mathcal{M}_{\Xn,H}(v^{[n]}),\,\,\, [E]\mapsto [\En]
\end{equation*}
which identifies $\mathrm{M}_{X,h}(v)$ with a smooth connected component of $\mathcal{M}_{\Xn,H}(v^{[n]})$.
\end{thm}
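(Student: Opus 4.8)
The plan is to verify the three assertions implicit in the statement: (i) $(-)^{[n]}$ is well-defined as a map on closed points, i.e. each $\En$ is a stable sheaf with Mukai vector $v^{[n]}$ with respect to a \emph{fixed} polarization $H$; (ii) this map is a morphism of schemes; (iii) its image is open and closed, and $(-)^{[n]}$ is an isomorphism onto that image. For (i), the hypotheses on $v$ put us exactly in the situation of Proposition~\ref{oneh}: all sheaves in $\M_{X,h}(v)$ are slope stable and locally free, so there is a single $H\in\Amp(\Xn)$ making every $\En$ slope stable with respect to $H$; in particular $\En$ is Gieseker stable, so $[\En]\in\mathcal{M}_{\Xn,H}(v^{[n]})$. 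That $v(\En)=v^{[n]}$ is the definition of $v^{[n]}$, and one should record that $v^{[n]}$ is independent of $[E]$ since the Mukai vector is locally constant in flat families (or by the explicit formula of \cite[§2]{wandel}).

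For (ii), I would use the universal property of $\mathcal{M}_{\Xn,H}(v^{[n]})$ as a (coarse) moduli space: it suffices to produce, from a flat family $\mathcal{E}$ of sheaves on $X$ parametrized by a scheme $T$ with $[\mathcal{E}_t]\in\M_{X,h}(v)$, a flat family of $H$-stable sheaves on $\Xn$ with fibres $(\mathcal{E}_t)^{[n]}$. This is the relative tautological construction: form $\mathcal{Z}\times T\subset X\times\Xn\times T$, pull back $\mathcal{E}$ along $q\times\id_T$, and push forward along the finite flat morphism $p\times\id_T$. Flatness over $T$ follows because $p\times\id_T$ is finite flat and $\mathcal{E}$ is $T$-flat with locally free fibres (so no higher Tor); base change identifies the fibre over $t$ with $(\mathcal{E}_t)^{[n]}$ by the same argument as in the excerpt. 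This yields a classifying morphism $\M_{X,h}(v)\to\mathcal{M}_{\Xn,H}(v^{[n]})$ on the level of schemes.

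For (iii), the key input is a comparison of deformation theories. On the source, $\M_{X,h}(v)$ is smooth of dimension $\langle v,v\rangle+2$ (Mukai) with tangent space $\Ext^1_X(E,E)$ and $\Ext^2_X(E,E)\cong\mathbb{C}$ via the trace-free decomposition; smoothness of $\mathcal{M}_{\Xn,H}$ at $[\En]$ and the statement that $(-)^{[n]}$ is a local isomorphism will both follow once I show
\begin{equation*}
\Ext^i_{\Xn}(\En,\En)\cong\Ext^i_X(E,E)\qquad\text{for }i=0,1,\ \text{and}\ \Ext^2_{\Xn}(\En,\En)\cong\mathbb{C}.
\end{equation*}
Here the hypothesis $h^i(X,E)=0$ for $i=1,2$ enters decisively: the derived McKay/BKR-type formula of Krug (cf.\ \cite{krug4}) computes $\Ext^*_{\Xn}(\En,\En)$ in terms of $\Ext^*_X(E,E)$, $\Ext^*_X(E,E\otimes\text{(twists)})$ and cohomology of $E$ and $\OO_X$, and the vanishing of $H^{>0}(X,E)$ (and of $H^{>0}$ of the relevant twists, which should reduce to the same vanishing plus that $E\not\cong\OO_X$) kills all the correction terms, leaving only the $i=0$ block — exactly the phenomenon behind \cite[Corollary 4.6]{wandel}. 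Granting this, $(-)^{[n]}$ induces an isomorphism on tangent spaces and a surjection on obstruction spaces at every closed point, hence is étale onto its image and $\mathcal{M}_{\Xn,H}(v^{[n]})$ is smooth of the expected dimension along that image; an étale morphism with image a single component, combined with injectivity of $(-)^{[n]}$ on points (a tautological bundle determines $E$, e.g. by restricting to the curve of subschemes through a fixed length-$(n-1)$ scheme, or again via the derived formula), gives that $(-)^{[n]}$ is an open immersion. Finally, openness of the image together with properness of $\M_{X,h}(v)$ — the moduli space of stable sheaves on a K3 with primitive Mukai vector is projective — forces the image to be closed as well, hence a connected component (connectedness being inherited since $\M_{X,h}(v)$ is connected, being an irreducible holomorphic symplectic variety). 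The main obstacle is the $\Ext$-computation: making the derived-category formula for $\Ext^*_{\Xn}(\En,\En)$ explicit and checking that \emph{all} the auxiliary cohomology groups that appear vanish under the stated hypotheses; this is presumably the content of Section~\ref{sec3} and is what I would isolate as a separate lemma before assembling the argument above.
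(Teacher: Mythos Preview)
Your overall architecture matches the paper's proof: the morphism is obtained from the relative tautological construction (the paper cites \cite[Proposition 2.1]{krug5} for this), injectivity on closed points comes from the reconstruction theorem $\En\cong\Fo\Rightarrow E\cong F$ (the paper cites \cite{bis} and \cite{krug2}), the tangent space comparison comes from Krug's $\Ext$-formula in \cite{krug4}, and closedness of the image comes from properness of $\M_{X,h}(v)$. So the strategy is right.

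There is, however, a concrete error in your obstruction-space step. The claim $\Ext^2_{\Xn}(\En,\En)\cong\mathbb{C}$ is false for $n\geq 2$. Krug's formula gives
\[
\Ext^2_{\Xn}(\En,\En)\;\cong\;\bigl(\Ext^*_X(E,E)\otimes S^{n-1}\mathrm{H}^*(X,\OO_X)\bigr)_2\;\oplus\;\bigl(\Ext^*_X(E,\OO_X)\otimes\mathrm{H}^*(X,E)\otimes S^{n-2}\mathrm{H}^*(X,\OO_X)\bigr)_2.
\]
Using $h^1(E)=h^2(E)=0$ and Serre duality on the K3 (so $\Ext^i_X(E,\OO_X)\cong \mathrm{H}^{2-i}(X,E)^\vee$), the first summand in degree $2$ is $\Ext^2_X(E,E)\oplus\Hom_X(E,E)\cong\mathbb{C}^2$, and the second summand is $\mathrm{H}^0(X,E)^\vee\otimes\mathrm{H}^0(X,E)$. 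Thus $\dim\Ext^2_{\Xn}(\En,\En)\geq 2$, and it is typically much larger. There are also no ``twists'' in Krug's formula; the extra terms involve $\Ext^*_X(E,\OO_X)$ and $\mathrm{H}^*(X,E)$, not $\Ext^*_X(E,E\otimes(\text{twist}))$.

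Fortunately this does not damage the proof, because the $\Ext^2$ computation is unnecessary. The paper's argument, and yours once this step is excised, runs as follows: the hypotheses give $\Ext^1_{\Xn}(\En,\En)\cong\Ext^1_X(E,E)$ (this part of your computation is correct and is exactly what the paper extracts from \cite[Corollary 4.2 (11)]{krug4}); since $\M_{X,h}(v)$ is smooth projective and the morphism is injective on closed points, the image has dimension $\dim\M_{X,h}(v)=\dim\Ext^1_X(E,E)$, which equals the tangent dimension of the target at every image point, forcing $\mathcal{M}_{\Xn,H}(v^{[n]})$ to be smooth of that dimension along the image. The differential is the natural map in Krug's formula, hence an isomorphism, so the morphism is \'etale; combined with properness and injectivity this yields an isomorphism onto a connected component. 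No control of obstruction spaces on $\Xn$ is needed.
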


\begin{proof}
First note that the map $[E] \mapsto [\En]$ is indeed a regular morphism, see for example \cite[Proposition 2.1]{krug5}. Furthermore this morphism is injective on closed points, which follows immediately from \cite[Theorem 1.1]{bis} (see also \cite[Theorem 1.2]{krug2} for a generalization).

By \cite[Corollary 4.2 (11)]{krug4} we find
\begin{equation*}
\Ext^1_{\Xn}(\En,\Fo)\cong\Ext^1_X(E,F)
\end{equation*}
since $h^0(X,E^{\vee})=h^2(X,E)=0$ as well as $h^1(X,E^{\vee})=h^1(X,E)=0$. For $E=F$ this isomorphisms translates to
\begin{equation*}
\dim(T_{[\En]}\mathcal{M}_{\Xn,H}(v^{[n]}))=\dim(T_{[E]}\mathrm{M}_{X,h}(v)).
\end{equation*}
These two facts imply that we can identify $\mathrm{M}_{X,h}(v)$ with a smooth connected component in $\mathcal{M}_{\Xn,H}(v^{[n]})$.
\end{proof}

\section{Descent of tautological sheaves  to Enriques manifolds}\label{sec2}
Let $G$ be a finite group. Consider an \'etale Galois cover $\varphi: Y \rightarrow Z$ with Galois group $G$, that is there is a free $G$-action on $Y$ such that $Z=Y/G$ and $\varphi$ is the quotient map.
In this situation there is an equivalence between the categories $\Coh(Z)$ of coherent sheaves on $Z$ and $\Coh^G(Y)$ of $G$-equivariant coherent sheaves on $Y$ given by the functors
\begin{align*}
&\varphi^{*}: \Coh(Z) \rightarrow \Coh^G(Y),\,\,\, E \mapsto \varphi^{*}E\,\,\text{and}\\
&\varphi^G_{*}: \Coh^G(Y) \rightarrow \Coh(Z),\,\,\, F\mapsto \left( \varphi_{*}(F)\right)^G 
\end{align*}
We say that a coherent sheaf $E$ on $Y$ descends to $Z$ if $E$ is in the image of $\varphi^{*}$, that is there is a coherent sheaf $F$ on $Z$ together with an isomorphism $E\cong\varphi^{*}(F)$.

A coherent sheaf $E$ on $X$ is said to be $G$-invariant, if there are isomorphisms $E\cong g^{*}E$ for every $g\in G$. A $G$-equivariant coherent sheaf is $G$-invariant, but the converse is not true. For our purposes the following will suffice, see \cite[Lemma 1]{ploog}:

\begin{prop}\label{desc}
Assume that $G$ is a cyclic group and $E$ is a simple $G$-invariant coherent sheaf on $Y$, then $E$ descends to $Z$.
\end{prop}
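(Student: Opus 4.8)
The plan is to promote the $G$-invariance of $E$ to a genuine $G$-equivariant structure and then read off the descended sheaf from the equivalence $\varphi^{*}\colon\Coh(Z)\xrightarrow{\sim}\Coh^{G}(Y)$ recalled above. Fix a generator $g$ of $G$ and write $m=|G|$ for its order. By $G$-invariance there is an isomorphism $\lambda\colon E\xrightarrow{\sim}g^{*}E$. Pulling back along $g$ and composing, define
\begin{equation*}
\phi\colon E\xrightarrow{\ \lambda\ }g^{*}E\xrightarrow{\ g^{*}\lambda\ }(g^{2})^{*}E\longrightarrow\cdots\longrightarrow(g^{m})^{*}E=E,
\end{equation*}
where the final identification uses $g^{m}=\id_{Y}$; this $\phi$ is an automorphism of $E$.

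Next I would use simplicity. Since $E$ is simple and we work over $\mathbb{C}$, we have $\End_{Y}(E)=\mathbb{C}$, so $\phi=c\cdot\id_{E}$ for some $c\in\mathbb{C}^{*}$. Pick $\zeta\in\mathbb{C}^{*}$ with $\zeta^{m}=c^{-1}$ and replace $\lambda$ by $\zeta\lambda$. Because the pullback of multiplication by a scalar is again multiplication by that scalar, each of the $m$ arrows above gets rescaled by $\zeta$, so the new composite equals $\zeta^{m}c\cdot\id_{E}=\id_{E}$. From this normalized $\lambda$ set $\lambda_{\id}=\id_{E}$ and, for $1\le k<m$, $\lambda_{g^{k}}:=(g^{k-1})^{*}\lambda\circ\cdots\circ g^{*}\lambda\circ\lambda$.

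It then remains to check that $\{\lambda_{h}\}_{h\in G}$ satisfies the cocycle identity $\lambda_{hh'}=(h')^{*}\lambda_{h}\circ\lambda_{h'}$. For exponents adding to less than $m$ this is just associativity of composition; the single extra relation, coming from $g^{m}=\id_{Y}$, is exactly the statement $\lambda_{g^{m}}=\id_{E}$, which we arranged in the previous step (one uses $(g^{m+j})^{*}=(g^{j})^{*}$ to fold the tail of the composite back onto its head). Hence $(E,\{\lambda_{h}\})$ is an object of $\Coh^{G}(Y)$, and applying the inverse equivalence $\varphi^{G}_{*}$ produces $F\in\Coh(Z)$ with $\varphi^{*}F\cong E$, so $E$ descends to $Z$. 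I expect the only real subtlety to be this last bookkeeping: for cyclic $G$ a single scalar normalization kills the one obstruction, whereas for a non-cyclic $G$ one would have to match several generators against all of their relations simultaneously and simplicity alone would no longer suffice.
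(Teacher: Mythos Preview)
Your argument is correct and is exactly the standard one: the paper does not give its own proof here but simply cites \cite[Lemma~1]{ploog}, and what you wrote is essentially that lemma's proof. The key point---that for a cyclic group the obstruction to upgrading $G$-invariance to a $G$-linearization is a single scalar in $\End_{Y}(E)=\mathbb{C}$, which can be killed by rescaling the chosen isomorphism $\lambda$ by an $m$-th root---is precisely Ploog's argument, and your closing remark about why this fails for non-cyclic $G$ is also the right perspective (the obstruction then lives in $H^{2}(G,\mathbb{C}^{*})$ rather than being a single scalar).
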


\begin{rem}
Recall that if $(X,\iota)$ is a pair consisting of a K3 surface and a fixed point free involution, then $G=\left\langle \iota \right\rangle \cong \mathbb{Z}/2\mathbb{Z}$ acts freely on $X$ and the quotient $S$ is an Enriques surface. The morphism $\pi: X\rightarrow S$ is an \'etale $\mathbb{Z}/2\mathbb{Z}$-Galois cover. 

On the other hand if $S$ is an Enriques surface, then its canonical bundle $\omega_S$ is 2-torsion. One can consider the induced canonical cover $\phi: \tilde{S}:=\operatorname{Spec}(\OO_S\oplus\omega_S)\rightarrow S$. The morphism $\phi$ is an \'etale $\mathbb{Z}/2\mathbb{Z}$-Galois cover and $\tilde{S}$ is a K3 surface with fixed point free involution, the covering involution of $\phi$. Furthermore $\phi_{*}\OO_{\tilde{S}}\cong\OO_S\oplus\omega_S$.
\end{rem}

In \cite{ogu} Oguiso and Schr\"oer generalized the notion of an Enriques surface to that of an Enriques manifold by mimicking the above constructions:
\begin{defi}
A manifold $Y$ is called an Enriques manifold if it is a connected complex manifold that is not simply connected and whose universal cover is a hyperk\"ahler manifold.
\end{defi}

\begin{rem}
    In \cite{sarti2} the authors also gave a definition of higher dimensional Enriques varieties, which slightly differs from the one of Enriques manifolds in \cite{ogu}.
\end{rem}

\begin{rem}
An Enriques manifold is of even dimension, say $\dim(Y)=2n$. The fundamental group $\pi_1(Y)$ is finite of order $d$ with $d\,\lvert\,n+1$. This number $d$ is called the index of $Y$. In addition $Y$ is projective and the canonical bundle $\omega_Y$ has finite order $d$ and generates the torsion group of $\Pic(Y)$, see \cite[Section 2]{ogu}.
\end{rem}

We will work with the following class of Enriques manifolds, see \cite[Proposition 4.1]{ogu}: 

\begin{exam}
Let $(X,\iota)$ be a pair consisting of a K3 surface together with a fixed point free involution $\iota$ on $X$. Then $X$ covers the Enriques surface $S=X/\iota$. If $n\in\mathbb{N}$ is odd, then $(X,\iota)$ induces the pair $(\Xn,\In)$ of the Hilbert scheme of $n$-points on $X$ and the induced fixed point free involution $\In$ on $\Xn$. Thus $G=\left\langle \In \right\rangle \cong \mathbb{Z}/2\mathbb{Z}$ acts freely on $\Xn$ and the quotient $\Sn$ is an Enriques manifold with index $d=2$ coming with an \'etale $\mathbb{Z}/2\mathbb{Z}$-cover $\rho: \Xn\rightarrow \Sn$. 
\end{exam}

We want to study the descent of sheaves from $X$ to $S$ respectively from $\Xn$ to $\Sn$. To do this we need the following lemma:
\begin{lem}\label{commute}
There is an isomorphism of functors from $\Coh(X)$ to $\Coh(\Xn)$:
\begin{equation*}
(\In)^{*}\left( (-)^{[n]}\right)  \cong (\iota^{*}(-))^{[n]}.
\end{equation*}
\end{lem}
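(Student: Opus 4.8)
The plan is to unwind both sides of the claimed isomorphism in terms of the universal subscheme $\ZZ \subset X\times\Xn$ and its projections $p\colon\ZZ\to\Xn$, $q\colon\ZZ\to X$, and to track how the involution $\It\times\In$ acts on the whole picture. First I would record the key geometric fact: the involution $\In$ on $\Xn$ is the one induced by $\iota$ via functoriality of the Hilbert scheme, so that the universal subscheme is equivariant, i.e. $(\iota\times\In)(\ZZ)=\ZZ$ as subschemes of $X\times\Xn$. Equivalently, there is a cartesian square relating $p,q$ to their $\iota$-twists: the automorphism $\sigma:=(\iota\times\In)|_\ZZ$ of $\ZZ$ satisfies $p\circ\sigma = \In\circ p$ and $q\circ\sigma = \iota\circ q$. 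This is the structural input that makes everything go through, and it is essentially the definition of $\In$ (see \cite[Proposition 4.1]{ogu} and the standard functoriality of $\mathrm{Hilb}^n$).

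Granting this, the computation is formal. For $E\in\Coh(X)$ we have
\begin{equation*}
(\In)^{*}\bigl(\En\bigr) = (\In)^{*}p_{*}q^{*}E.
\end{equation*}
Since $p$ is finite (hence proper) and flat of degree $n$, and since the square
\begin{equation*}
\begin{tikzcd}
\ZZ \arrow[r,"\sigma"] \arrow[d,"p"'] & \ZZ \arrow[d,"p"] \\
\Xn \arrow[r,"\In"'] & \Xn
\end{tikzcd}
\end{equation*}
is cartesian (both horizontal maps are isomorphisms), flat base change gives $(\In)^{*}p_{*}q^{*}E \cong p_{*}\sigma^{*}q^{*}E$. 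Then $\sigma^{*}q^{*}E = (q\circ\sigma)^{*}E = (\iota\circ q)^{*}E = q^{*}\iota^{*}E$, so we obtain $(\In)^{*}\En \cong p_{*}q^{*}(\iota^{*}E) = (\iota^{*}E)^{[n]}$. To upgrade this pointwise isomorphism to an isomorphism of functors, I would simply note that every step is natural in $E$: pullback $q^{*}$, the base-change isomorphism for the fixed cartesian square, and pushforward $p_{*}$ are all functorial, so the composite isomorphism is a natural transformation, and being an isomorphism on each object it is an isomorphism of functors.

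The only genuine point requiring care — the main obstacle, such as it is — is justifying that the square above is cartesian, i.e. that $\sigma$ is a well-defined automorphism of $\ZZ$ covering $\In$, and that flat base change applies. The first part is where one must invoke the precise definition of the induced involution $\In$: it is characterized by the property that, on $T$-points, $\In$ sends a flat family of length-$n$ subschemes $W\subset X\times T$ to $(\iota\times\id_T)(W)$; applied to the universal family this yields exactly the equivariance of $\ZZ$ and the relations $p\sigma=\In p$, $q\sigma=\iota q$. Once that is in place, flat base change is immediate since $p$ is flat and proper (even finite), and $\In$ is an isomorphism so the base-change square is trivially cartesian. I would state the equivariance of $\ZZ$ as the one nontrivial lemma and then present the three-line functor computation above; alternatively, one can phrase the entire argument inside the derived category using that $(-)^{[n]} = \mathbf{R}p_{*}q^{*}$ and that $q$ is flat (\cite[Theorem 2.1]{krug2}), which makes the naturality and base-change formalism completely routine.
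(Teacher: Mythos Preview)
Your proof is correct and rests on the same geometric input as the paper's: the universal subscheme $\ZZ\subset X\times\Xn$ is invariant under $\iota\times\In$, equivalently $(\iota\times\In)^{*}\OO_{\ZZ}\cong\OO_{\ZZ}$. The difference is only in packaging. The paper observes that $(-)^{[n]}=\mathrm{FM}_{\OO_{\ZZ}}$ is a Fourier--Mukai transform, notes that the invariance of $\OO_{\ZZ}$ makes the kernel $\mu$-invariant in the sense of Krug--Sosna \cite{krug}, and then quotes their general lemma \cite[Lemma~3.6(iii)]{krug} to conclude $(\In)^{*}\circ\mathrm{FM}_{\OO_{\ZZ}}\cong\mathrm{FM}_{\OO_{\ZZ}}\circ\iota^{*}$. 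You instead unwind that lemma by hand: restrict $\iota\times\In$ to an automorphism $\sigma$ of $\ZZ$, form the cartesian square over $\In$, and apply flat base change to $p_{*}q^{*}$. Your route is more elementary and self-contained (no external reference needed beyond the definition of $\In$), while the paper's route is shorter on the page and situates the statement inside a general equivariant-kernel framework that is reused later (e.g.\ to obtain the descended functor $(-)_{[n]}$ via \cite[Proposition~4.2]{krug}). Either is fine; they are the same argument at different levels of abstraction.
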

\begin{proof}
Recall that $(-)^{[n]}=\mathrm{FM}_{\OO_{\ZZ}}(-)$ can be written as the Fourier -- Mukai transform with kernel the structure sheaf of universal family $\ZZ$ in $X\times \Xn$, see for example \cite[Section 2.3]{krug2}. Define a group isomorphism 
\begin{equation*}
\mu: \left\langle \iota \right\rangle  \rightarrow \left\langle \In \right\rangle ,\,\,\,\iota \mapsto \In
\end{equation*}
and note that this is a so-called $c$-isomorphism, see \cite[Definitions 3.1 and 3.3]{krug}. By the definition of the universal family we see that there is an isomorphism
\begin{equation*}
(\iota\times \mu(\iota))^{*}\OO_{\ZZ} = (\iota\times\In)^{*}\OO_{\ZZ} \cong \OO_{\ZZ}.
\end{equation*}
Thus $\OO_{\ZZ}$ is $\mu$-invariant, see \cite[Definition 3.4]{krug}, which implies
\begin{equation*}
(\In)^{*}\left( \mathrm{FM}_{\OO_{\ZZ}}(-)\right) \cong \mathrm{FM}_{\OO_{\ZZ}}(\iota^{*}(-))
\end{equation*}
by \cite[Lemma 3.6 (iii)]{krug}.
\end{proof}

We can now prove the main result of this section:
\begin{thm}\label{desce}
Assume $(X,\iota)$ is a K3 surface together with a fixed point free involution and let $n\in\mathbb{N}$ be an odd number. If a torsion free sheaf $E$ on $X$ is simple, then the associated tautological sheaf $\En$ on $\Xn$ descends to $\Sn$ if and only if $E$ descends to $S$.  
\end{thm}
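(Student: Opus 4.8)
The plan is to reduce the statement, on both sides of the equivalence, to $\iota$-invariance of the sheaf in question, using Lemma \ref{commute} together with the descent criterion of Proposition \ref{desc}. Write $G=\langle\iota\rangle\cong\mathbb{Z}/2\mathbb{Z}$, with quotient $\pi\colon X\to S$, and $\langle\In\rangle\cong\mathbb{Z}/2\mathbb{Z}$, with quotient $\rho\colon\Xn\to\Sn$; recall $\pi\iota=\pi$ and $\rho\In=\rho$. First I would record that a descending sheaf is always invariant: if $F$ is a sheaf on $S$ then $\iota^{*}\pi^{*}F=(\pi\iota)^{*}F=\pi^{*}F$, and likewise on $\Xn$. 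Since $G$ is cyclic and $E$ is simple, Proposition \ref{desc} supplies the converse for $E$, so that $E$ descends to $S$ if and only if $\iota^{*}E\cong E$.

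For the implication ``$\En$ descends $\Rightarrow$ $E$ descends'', assume $\En\cong\rho^{*}\Gn$. By the previous remark $(\In)^{*}\En\cong\En$, so Lemma \ref{commute} gives $(\iota^{*}E)^{[n]}\cong\En=E^{[n]}$. I would then invoke the injectivity of $(-)^{[n]}$ on isomorphism classes of torsion free sheaves --- the same ingredient used in the proof of Theorem \ref{compo}, see \cite[Theorem 1.1]{bis} and \cite[Theorem 1.2]{krug2} --- to conclude $\iota^{*}E\cong E$, whence $E$ descends to $S$ by the first paragraph.

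For the converse I would \emph{not} apply Proposition \ref{desc} to $\En$ (which need not be simple), but instead transport a $G$-equivariant structure through the Fourier--Mukai transform. If $E$ descends then it is canonically $G$-equivariant. Now $\ZZ\subset X\times\Xn$ is, as a closed subscheme, preserved by $\iota\times\In=\iota\times\mu(\iota)$ (the scheme-theoretic fibre of $(\iota\times\In)(\ZZ)$ over $[Z]$ is again $Z$, so by the universal property it is $\ZZ$ itself), so the $\mu$-invariance of $\OO_{\ZZ}$ noted in the proof of Lemma \ref{commute} upgrades to $\mu$-equivariance in the sense of \cite[Definition 3.4]{krug}. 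Hence $(-)^{[n]}=\mathrm{FM}_{\OO_{\ZZ}}$ lifts to an exact functor $\Coh^{G}(X)\to\Coh^{\langle\In\rangle}(\Xn)$, and $\En$ acquires a $\langle\In\rangle$-equivariant structure, i.e. $\En\cong\rho^{*}\Gn$ for some sheaf $\Gn$ on $\Sn$; that is, $\En$ descends to $\Sn$.

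The crux I expect is precisely this asymmetry: simplicity does not propagate from $E$ to $\En$, so the descent of $\En$ cannot be read off from invariance alone via Proposition \ref{desc}, and one must promote the $\mu$-invariance of the kernel $\OO_{\ZZ}$ occurring in Lemma \ref{commute} to genuine $\mu$-equivariance and feed it the equivariant structure coming from the descent of $E$. A secondary point to handle with care is that the injectivity of $(-)^{[n]}$ is being applied to possibly non-locally-free torsion free sheaves $E$ and $\iota^{*}E$, so it should be quoted in that generality.
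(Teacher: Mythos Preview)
Your argument is correct, and the forward implication (``$\En$ descends $\Rightarrow$ $E$ descends'') coincides with the paper's. For the converse, however, you and the paper diverge. The paper \emph{does} apply Proposition~\ref{desc} to $\En$: it first proves that $\En$ is simple whenever $E$ is, using Krug's formula
\[
\End_{\Xn}(\En)\cong \End_X(E)\oplus \mathrm{H}^0(X,E^{*})\otimes \mathrm{H}^0(X,E)
\]
from \cite[Corollary 4.2 (11)]{krug4} and arguing that a nonzero second summand would yield a nonzero endomorphism of $E$ factoring through $\OO_X$, hence with image of rank one, contradicting simplicity. With $\En$ simple, the whole proof collapses to a symmetric chain of equivalences via Proposition~\ref{desc}, Lemma~\ref{commute}, and \cite[Theorem~1.2]{krug2}. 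So your stated crux---that ``simplicity does not propagate from $E$ to $\En$''---is in fact mistaken.

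Your alternative for the converse, upgrading the $\mu$-invariance of $\OO_{\ZZ}$ to a $\mu$-linearization and transporting the equivariant structure through the Fourier--Mukai transform, is nonetheless valid. Indeed this is precisely what the paper does \emph{immediately after} the theorem to construct the descended functor $(-)_{[n]}\colon\Db(S)\to\Db(\Sn)$ and the commutative diagram~\eqref{diag}. Your route thus anticipates that construction and folds it into the proof; it buys robustness (no appeal to simplicity of $\En$), while the paper's route gains the simplicity of $\En$ as a byproduct and keeps the two implications formally parallel.
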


\begin{proof}
First we note that if $E$ is simple then $\En$ is also simple. Indeed  by \cite[Corollary 4.2 (11)]{krug4} there is an isomorphism
\begin{equation*}
    \End_{\Xn}(\En)\cong \End_X(E)\oplus \mathrm{H}^0(X,E^{*})\otimes \mathrm{H}^0(X,E).
\end{equation*}
Since $E$ is simple the second summand must vanish, since otherwise $E$ would have an endomorphism, which has image of rank one and thus is no homothety. 

Proposition \ref{desc} shows
\begin{equation*}
\En\,\,\text{decends to}\,\, \Sn \Leftrightarrow (\In)^{*}\En\cong \En.
\end{equation*}
By Lemma \ref{commute} we get
\begin{equation*}
(\In)^{*}\En\cong \En \Leftrightarrow (\iota^{*}E)^{[n]}\cong \En.
\end{equation*}
But \cite[Theorem 1.2]{krug2} shows
\begin{equation*}
(\iota^{*}E)^{[n]}\cong \En \Leftrightarrow \iota^{*}E\cong E.
\end{equation*}
Thus $\En$ descends to $\Sn$ if and only if $E$ descends to $S$.
\end{proof}

The theorem shows that given a simple $\iota$-invariant torsion free sheaf $E$ on $X$ then there is $F\in \Coh(S)$ and $G\in \Coh(\Sn)$ such that
\begin{equation*}
E\cong \pi^{*}F\,\,\,\text{as well as}\,\,\, \En\cong\rho^{*}G.
\end{equation*}
In fact, there is a close relationship between the sheaves $F$ and $G$: as $\OO_{\mathcal{Z}}$ is $\mu$-invariant on $X\times\Xn$, the structure sheaf $\OO_{\mathcal{Z}}$ is naturally $\mu$-linearizable on $\mathcal{Z}$, hence so is $\OO_{\mathcal{Z}}$ as a sheaf on $X\times\Xn$.

Therefore by \cite[Proposition 4.2]{krug} the functor 
$(-)^{[n]}$ descends to a functor 
\begin{equation*}
(-)_{[n]}: \Db(S) \rightarrow \Db(\Sn)
\end{equation*}
together with a commutative diagram
 	 	\begin{equation}\label{diag}
 		\begin{tikzcd}
 			\Db(S) \arrow{r}{\pi^{*}}[swap]{\cong}\arrow[d,"(-)_{[n]}"]\arrow[rr,bend left=20,"\pi^{*}"]& \mathrm{D}^b_{\iota}(X)\arrow[r,"\mathrm{For}"]\arrow[d,"(-)^{[n]}_{\mathbb{Z}/2\mathbb{Z}}"] & \Db(X)\arrow[d,"(-)^{[n]}"]\\
    \Db(\Sn) \arrow{r}{\rho^{*}}[swap]{\cong} \arrow[rr,bend right=20, "\rho^{*}"]& \mathrm{D}^b_{\In}(\Xn)\arrow[r,"\mathrm{For}"] & \Db(\Xn)
 		\end{tikzcd}
 	\end{equation}	
Here $\mathrm{For}$ is the functor forgetting the linearizations.

That is if we start with a simple sheaf $E$ on $X$, which descends to $S$ i.e. $E\cong \pi^{*}F$, then $\En$ descends to $\Sn$ with $\En\cong \rho^{*}\Fn$.

\begin{rem}\label{choices}
    As $\OO_{\ZZ}$ has two choices of a $\mu$-linearization (differing by the non-trivial character), there are actually two choices of the descent $(-)_{[n]}: \Db(S)\rightarrow \Db(\Sn)$ (differing by tensor product by $\omega_{\Sn}$).
\end{rem}

We end this section by giving a more explicit description of $(-)_{[n]}$ similar to $(-)^{[n]}$. For this recall that by \cite[2.4]{krug5} we have
\begin{equation*}
    (-)^{[n]}=\mathrm{FM}_{\OO_{\mathcal{Z}}}(-)={p_{\Xn}}_{*}(p_X^{*}(-)),
\end{equation*}
where $p_X: \mathcal{Z}\rightarrow X$ and $p_{\Xn}: \mathcal{Z}\rightarrow \Xn$ are the projections.

The group $G=\mathbb{Z}/2\mathbb{Z}$ acts freely on $X$ via $\iota$ with quotient $S$, freely on $\Xn$ via $\iota^{[n]}$ with quotient $\Sn$ and thus also freely on $X\times\Xn$ via $\iota\times\iota^{[n]}$. 
As the universal family $\mathcal{Z}\hookrightarrow X\times \Xn$ is $G$-invariant, we get a closed subvariety $\mathcal{Z}/G\hookrightarrow (X\times \Xn)/G$. Furthermore the projections $p_X$ and $p_{\Xn}$ are $G$-equivariant. By \cite[Lemma 2.3.3]{haut} we get cartesian squares
 	 	\begin{equation}\label{diag2}
 		\begin{tikzcd}
 			X \arrow[d,swap,"\pi"]& \mathcal{Z} \arrow[l,swap,"p_X"] \arrow[r,"p_{\Xn}"]\arrow[d,"\alpha"] & \Xn\arrow[d,"\rho"]\\
    S & \mathcal{Z}/G \arrow[l,swap,"p_S"] \arrow[r,"p_{\Sn}"] & \Sn
 		\end{tikzcd}
 	\end{equation}	

\begin{thm}\label{descr}
    The functor $(-)_{[n]}:\Db(S)\rightarrow \Db(\Sn)$ has the following description:
    \begin{equation*}
        (-)_{[n]}={p_{\Sn*}}(p_S^{*}(-)).
    \end{equation*}
\end{thm}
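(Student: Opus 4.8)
The plan is to verify the claimed formula by pulling the definition of $(-)_{[n]}$ through the descent machinery and using flat base change along the cartesian squares \eqref{diag2}. Recall from \cite[Proposition 4.2]{krug} that $(-)_{[n]}$ is characterized by the commutativity of diagram \eqref{diag}: it is the unique functor making $(-)^{[n]}\circ\mathrm{For}\cong\mathrm{For}\circ(-)_{[n]}^{\mathbb{Z}/2\mathbb{Z}}$ hold after composing with the equivalences $\pi^{*}$ and $\rho^{*}$. Concretely, for $F\in\Db(S)$ we have $\rho^{*}(F_{[n]})\cong (\pi^{*}F)^{[n]} = {p_{\Xn}}_{*}p_X^{*}\pi^{*}F$. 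So the first step is to compute the right-hand side of the asserted formula after applying $\rho^{*}$ and check it agrees with this.

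First I would use the right-hand cartesian square in \eqref{diag2}, namely the square with corners $\mathcal{Z}$, $\Xn$, $\mathcal{Z}/G$, $\Sn$ and maps $p_{\Xn}$, $\rho$, $\alpha$, $p_{\Sn}$. Since $\rho$ and $\alpha$ are étale (being quotient maps by free $\mathbb{Z}/2\mathbb{Z}$-actions), this square is cartesian with flat — indeed étale — vertical maps, so flat base change gives $\rho^{*}{p_{\Sn}}_{*}(-)\cong{p_{\Xn}}_{*}\alpha^{*}(-)$ as functors $\Db(\mathcal{Z}/G)\to\Db(\Xn)$. Next, applying $\alpha^{*}$ to $p_S^{*}F$ and using the left-hand cartesian square in \eqref{diag2} (corners $X$, $\mathcal{Z}$, $S$, $\mathcal{Z}/G$; maps $p_X$, $\pi$, $\alpha$, $p_S$), commutativity $\alpha^{*}p_S^{*}\cong p_X^{*}\pi^{*}$ is immediate from $p_S\circ\alpha=\pi\circ p_X$. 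Chaining these, $\rho^{*}\big({p_{\Sn}}_{*}p_S^{*}F\big)\cong{p_{\Xn}}_{*}\alpha^{*}p_S^{*}F\cong{p_{\Xn}}_{*}p_X^{*}\pi^{*}F=(\pi^{*}F)^{[n]}\cong\rho^{*}(F_{[n]})$. Since $\rho^{*}:\Db(\Sn)\to\mathrm{D}^b_{\In}(\Xn)$ is an equivalence (hence conservative and faithful), and both sides are obtained by the same descended-functor construction, this identifies $F_{[n]}$ with ${p_{\Sn}}_{*}p_S^{*}F$.

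The one subtlety — and the step I expect to be the genuine obstacle — is that an isomorphism after applying the forgetful functor $\mathrm{For}\circ\rho^{*}$, i.e. at the level of underlying complexes on $\Xn$, is not by itself enough: one must check the isomorphism is compatible with the $\mathbb{Z}/2\mathbb{Z}$-linearizations, since $\rho^{*}$ lands in the equivariant category $\mathrm{D}^b_{\In}(\Xn)$ and it is the linearized object that descends. Concretely I would promote the base-change isomorphisms above to the equivariant setting: the maps $p_X$, $p_{\Xn}$ are $G$-equivariant by construction (this is exactly what makes $\mathcal{Z}/G$, $p_S$, $p_{\Sn}$ well-defined), so $p_X^{*}$ and ${p_{\Xn}}_{*}$ carry canonical linearizations, and the base-change natural transformations are morphisms of linearized functors. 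One then invokes the descent equivalence in the form of \cite[Lemma 2.3.3]{haut} — which is precisely the statement that functors built from $G$-equivariant maps between $G$-varieties with free action descend compatibly — to conclude that ${p_{\Sn}}_{*}p_S^{*}(-)$ is the functor on $\Db(S)$ whose pullback along $\rho^{*}$ recovers $(\pi^{*}(-))^{[n]}$ with its natural linearization. By the uniqueness clause in \cite[Proposition 4.2]{krug} (up to the ambiguity recorded in Remark \ref{choices}, which is fixed by the choice of $\mu$-linearization of $\OO_{\mathcal{Z}}$ already made), this is exactly $(-)_{[n]}$.

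Alternatively, and perhaps more cleanly, I would phrase the whole argument Fourier–Mukai-theoretically: $(-)_{[n]}$ is by \cite[Proposition 4.2]{krug} the Fourier–Mukai transform on $\Db(S)\to\Db(\Sn)$ whose kernel is the descent of $\OO_{\mathcal{Z}}$ along $(X\times\Xn)/G=(X/G)\times(\Xn/G)\leftarrow X\times\Xn$, namely $({\alpha}_{*}\OO_{\mathcal{Z}})^{G}=\OO_{\mathcal{Z}/G}$ (using that $\alpha$ is étale so $\alpha_{*}\OO_{\mathcal{Z}}$ decomposes into its two isotypic pieces, and $\OO_{\mathcal{Z}/G}$ is the invariant one for the fixed linearization). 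Then $\mathrm{FM}_{\OO_{\mathcal{Z}/G}}={p_{\Sn}}_{*}\big(\OO_{\mathcal{Z}/G}\otimes p_S^{*}(-)\big)={p_{\Sn}}_{*}p_S^{*}(-)$ since $\OO_{\mathcal{Z}/G}$ is the structure sheaf of the subvariety $\mathcal{Z}/G\hookrightarrow S\times\Sn$ and $p_S$, $p_{\Sn}$ are the restrictions of the projections — exactly mirroring the identity $(-)^{[n]}={p_{\Xn}}_{*}p_X^{*}(-)$ recalled just before the statement. I would present the first (base-change) argument as the main proof and mention the kernel computation as the conceptual reason it works.
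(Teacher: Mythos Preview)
Your main argument is correct and reaches the goal, but it proceeds dually to the paper's proof. The paper works \emph{downwards}: starting from the identity $\rho_{*}(\rho^{*}(-))^{G}=\id$, it writes $(-)_{[n]}=\rho_{*}((\pi^{*}(-))^{[n]})^{G}$, expands $(-)^{[n]}={p_{\Xn}}_{*}p_X^{*}$, uses the commutativity of diagram \eqref{diag2} and the $G$-equivariant projection formula to replace $\rho_{*}{p_{\Xn}}_{*}\alpha^{*}(-)$ by ${p_{\Sn}}_{*}\alpha_{*}\alpha^{*}(-)$, and finally collapses $\alpha_{*}\alpha^{*}(-)^{G}$ to the identity. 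You instead work \emph{upwards}: you pull back ${p_{\Sn}}_{*}p_S^{*}F$ along $\rho$, apply flat base change on the right square of \eqref{diag2}, and match the result with $\rho^{*}(F_{[n]})$ via diagram \eqref{diag}. The paper's route has the advantage that the linearization issue never surfaces---invariants of a pushforward are computed directly---whereas you must (and do) argue separately that the base-change isomorphism is $\In$-equivariant before invoking that $\rho^{*}$ is an equivalence onto $\mathrm{D}^b_{\In}(\Xn)$. Both are short; the paper's is slightly cleaner for exactly the reason you flagged as the ``genuine obstacle''.

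One inaccuracy in your alternative Fourier--Mukai description: $\mathcal{Z}/G$ is \emph{not} a subvariety of $S\times\Sn$. The diagonal quotient $(X\times\Xn)/G$ maps to $S\times\Sn=(X/G)\times(\Xn/G)$ by an \'etale double cover, and the induced map $\mathcal{Z}/G\to S\times\Sn$ is generically $2:1$ onto its image (a point $(x,[Z])$ and $(x,[\iota Z])$ lie in different $G$-orbits but have the same image). The kernel of $(-)_{[n]}$ on $S\times\Sn$ is the pushforward of $\OO_{\mathcal{Z}/G}$ along this finite map, not a structure sheaf of a subscheme. This does not affect the formula ${p_{\Sn}}_{*}p_S^{*}(-)$, which is stated intrinsically on $\mathcal{Z}/G$, but the parenthetical ``exactly mirroring'' remark should be dropped.
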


\begin{proof}
    From diagram \eqref{diag} we see that $\rho^{*}((-)_{[n]})=(\pi^{*}(-))^{[n]}$. Since $\rho_{*}(\rho^{*}(-))^G= \mathrm{id}$ we find
    \begin{align*}
        (-)_{[n]}&= \rho_{*}((\pi^{*}(-))^{[n]})^G = \rho_{*}({p_{\Xn}}_{*}(p_X^{*}(\pi^{*}(-))))^G\\
        &={p_{\Sn*}}(\alpha_{*}(\alpha^{*}(p_S^{*}(-))))^G={p_{\Sn*}}(\alpha_{*}(\alpha^{*}(p_S^{*}(-)))^G)\\
        &={p_{\Sn*}}(p_S^{*}(-)).
    \end{align*}
    Here we used the commutativity of diagram \eqref{diag2}, the fact that $G$ acts trivially on $\mathcal{Z}/G$ and $\Sn$ hence by \cite[Equation (5)]{krug4} we have $(-)^G{p_{\Sn*}}={p_{\Sn*}}(-)^G$ and the $G$-equivariant projection formula.
\end{proof}

\section{Computation of certain extension spaces}\label{sec3}
In \cite[Theorem 3.17]{krug6} Krug gave explicit formulas for homological invariants of tautological objects in $\Db(\Xn)$ in terms of those in $\Db(X)$, for example for $E,F\in \Db(X)$ there is an isomorphism of graded vector spaces:
\begin{align*}
\Ext^{*}_{\Xn}(\En,\Fmn)\cong &\Ext^{*}_X(E,F)\otimes S^{n-1}\mathrm{H}^{*}(X,\OO_X)\\
&\oplus \Ext^{*}_X(E,\OO_X)\otimes\Ext^{*}_X(\OO_X,F)\otimes S^{n-2}\mathrm{H}^{*}(X,\OO_X).
\end{align*}
See also \cite[Section 4]{krug4} for a considerably simplified proof of this formula.

In this section we want to find homological invariants of sheaves of the form $\Gn$ on $\Sn$ in terms of the sheaf $G$ on $S$. It is certainly possible to find a general formula similar to Krug's result, but to keep formulas and proofs short and readable and since it is enough for our purposes, we will restrict our attention to $\Hom$- and $\Ext^1$- spaces as well as sheaves without higher cohomology. We will use the notations and results from \cite{krug4}.

We start by studying how Krug's result behaves with respect to the group actions by $\mathbb{Z}/2\mathbb{Z}$ on $\Xn$ via $\In$ and on $X$ via $\iota$. We will denote the various versions of the group $G=\mathbb{Z}/2\mathbb{Z}$ in the following by their nontrivial element, that is by $\iota$ or $\In$ etc.

\begin{lem}\label{iotas}
Assume $(X,\iota)$ is a K3 surface together with a fixed point free involution. For $\iota$-equivariant coherent sheaves $E,F\in \Coh_{\iota}(X)$ there is an isomorphism of graded vector spaces:
\begin{align*}
\left( \Ext^{*}_{\Xn}(\En,\Fmn)\right)^{\In} \cong &\left( \Ext^{*}_X(E,F)\otimes S^{n-1}\mathrm{H}^{*}(X,\OO_X)\right)^{\iota} \\
&\oplus \left( \Ext^{*}_X(E,\OO_X)\otimes\Ext^{*}_X(\OO_X,F)\otimes S^{n-2}\mathrm{H}^{*}(X,\OO_X)\right)^{\iota}.
\end{align*}
\end{lem}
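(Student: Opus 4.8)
The plan is to take the graded vector space isomorphism of Krug (\cite[Theorem 3.17]{krug6}, in the form recalled at the start of this section) and promote it to an isomorphism of $\mathbb{Z}/2\mathbb{Z}$-representations, where $\mathbb{Z}/2\mathbb{Z}=\langle\In\rangle$ acts on the left-hand side and $\mathbb{Z}/2\mathbb{Z}=\langle\iota\rangle$ acts on the right-hand side, the two being matched via the $c$-isomorphism $\mu:\langle\iota\rangle\to\langle\In\rangle$ of Lemma \ref{commute}. Once the isomorphism is $\mu$-equivariant, taking invariants on both sides is a functorial operation and yields exactly the claimed formula, using that $(V\oplus W)^G=V^G\oplus W^G$ and that invariants commute with the grading. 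So the real content is checking equivariance of Krug's isomorphism.

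First I would make precise what the actions are. Since $E,F$ are $\iota$-equivariant, $\iota^{*}E\cong E$ and $\iota^{*}F\cong F$ as equivariant sheaves, so $\iota$ acts on each of $\Ext^{*}_X(E,F)$, $\Ext^{*}_X(E,\OO_X)$, $\Ext^{*}_X(\OO_X,F)$ and $\mathrm{H}^{*}(X,\OO_X)$ (the structure sheaf carrying its canonical linearization), and hence on the symmetric powers $S^{n-1}\mathrm{H}^{*}(X,\OO_X)$, $S^{n-2}\mathrm{H}^{*}(X,\OO_X)$ and the tensor products above. By Lemma \ref{commute} and its proof, $(\In)^{*}\En\cong\En$ and $(\In)^{*}\Fmn\cong\Fmn$ as $\In$-equivariant sheaves, so $\In$ acts on $\Ext^{*}_{\Xn}(\En,\Fmn)$. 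The assertion to verify is that, under these actions, Krug's decomposition isomorphism intertwines the $\In$-action with the $\iota$-action transported through $\mu$.

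The key step is therefore to revisit the construction of Krug's isomorphism and observe it is natural enough to be equivariant. Krug's formula (in the simplified derivation of \cite[Section 4]{krug4}) is built from the Fourier--Mukai description $(-)^{[n]}=\mathrm{FM}_{\OO_{\ZZ}}(-)$ together with standard adjunctions, base change, and the combinatorics of the $\Sn$-equivariant geometry of $X^n$ and its Hilbert--Chow resolution; every ingredient — the kernel $\OO_{\ZZ}$, the diagonal and partial-diagonal embeddings, the projections — is $\mu$-invariant or $\mu$-equivariant by the same argument already used in Lemma \ref{commute} (namely $(\iota\times\In)^{*}\OO_{\ZZ}\cong\OO_{\ZZ}$). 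Concretely, I would invoke the $c$-equivariant formalism of \cite[Section 3]{krug} — in particular \cite[Lemma 3.6]{krug} — which says that a $\mu$-invariant Fourier--Mukai kernel induces, on all derived Hom-spaces of tautological objects, an isomorphism that is compatible with the induced linearizations; applying this at each stage of Krug's computation shows the final decomposition is an isomorphism of $\mu$-representations. Then take $(-)^{\In}=(-)^{\iota}$-invariants on both sides.

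The main obstacle I anticipate is bookkeeping rather than conceptual: one must make sure the linearization of $\OO_X$ chosen on the right-hand side (there are two, differing by the sign character, as noted in Remark \ref{choices}) is the one compatible with the $\mu$-invariance of $\OO_{\ZZ}$ used to linearize $\OO_X$ as $p_{X*}$ of a structure sheaf, so that the $\iota$-action on $S^{\bullet}\mathrm{H}^{*}(X,\OO_X)$ and on the $\Ext^{*}_X(E,\OO_X)$, $\Ext^{*}_X(\OO_X,F)$ factors really is the one induced by that single consistent choice; an inconsistent choice would twist some summands by the sign character and break the identity. The cleanest way to handle this is to fix the $\mu$-linearization of $\OO_{\ZZ}$ once and for all, let it propagate through $(-)^{[n]}_{\mathbb{Z}/2\mathbb{Z}}$ as in diagram \eqref{diag}, and only at the very end read off invariants; with that discipline the equivariance of Krug's isomorphism is immediate from \cite[Lemma 3.6]{krug}, and the statement follows by applying the exact functor $(-)^{\mathbb{Z}/2\mathbb{Z}}$ to both sides.
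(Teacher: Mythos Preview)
Your strategy --- promote Krug's decomposition to a $\mu$-equivariant isomorphism of $\mathbb{Z}/2\mathbb{Z}$-representations and then take invariants --- is exactly the paper's approach. The paper carries this out concretely rather than by an abstract appeal to equivariance of the kernel: it factors $(-)^{[n]}\cong\Psi\circ\mathsf{C}$ with $\Psi$ the BKR equivalence and $\mathsf{C}(E)=\bigoplus_i\mathrm{pr}_i^{*}E$, uses the relation $(\In)^{*}\circ\Psi=\Psi\circ(\It)^{*}$ to transfer the problem to $\mathfrak{S}_n$-equivariant $\Ext$-groups on $X^n$, and then applies K\"unneth, where the diagonal action $\It$ is visibly $\iota$ on each box factor and commutes with the $\mathfrak{S}_n$-action --- this explicit passage to $X^n$ is what makes the equivariance transparent and dissolves the linearization bookkeeping you flag.
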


\begin{proof}
Note that on the right hand side of the formula we take invariants with respect to the actions induced by the linearizations of $E$, $F$ and $\OO_X$. On the left hand side we take invariants with respect to the \emph{induced} linearizations on $\En$ and $\Fo$. The existence of the induced linearizations follows from the right-hand side of diagram \eqref{diag}.

By \cite[Theorem 3.6]{krug4} there is an isomorphism of functors
\begin{equation}\label{isofunc}
\left(-\right)^{[n]}\cong\Psi\circ \mathsf{C},
\end{equation}
where $\mathsf{C}: \Coh(X) \rightarrow \Coh_{\mathfrak{S}_n}(X^n)$ is the exact functor with
\begin{equation*}
\mathsf{C}(E):=\mathsf{Ind}_{\mathfrak{S}_{n-1}}^{\mathfrak{S}_n}\mathrm{pr}_1^{*}E\cong \bigoplus\limits_{i=1}^n \mathrm{pr}_i^{*}E.
\end{equation*}

Furthermore $\Psi: \mathrm{D}^{\mathrm{b}}_{\mathfrak{S}_n}(X^n) \rightarrow \Db(\Xn)$ is the Fourier - Mukai transform with kernel the structure sheaf of the isospectral Hilbert scheme $I^nX$. Here the isosprectral Hilbert scheme is the reduced fiber product $I^nX:=(\Xn\times_{S^nX}X^n)_{\mathrm{red}}$ of the quotient map $\nu:X^n\rightarrow S^nX$ to the symmetric power and the Hilbert - Chow morphism $\mu: \Xn \rightarrow S^nX$. This Fourier - Mukai transform is an equivalence, see \cite[Proposition 2.8]{krug4} and satisfies 
\begin{equation}\label{psiinv}
\left( \In\right)^{*}\circ\Psi=\Psi\circ \left( \It\right)^{*}
\end{equation}
see for example \cite[Section 5.6]{krug}. Here $\It$ is the induced involution on $X^n$. 

We have the following chain of isomorphisms:
\begin{align*}
\left( \Ext^{*}_{\Xn}(\En,\Fmn)\right)^{\In} &\cong \left( \Ext^{*}_{\Xn}(\Psi(\mathsf{C}(E)),\Psi(\mathsf{C}(F)))\right)^{\In}\\
&\cong \left( \Ext^{*}_{X^n,\mathfrak{S}_n}(\mathsf{C}(E),\mathsf{C}(F))\right)^{\It}\\
&\cong \left( \Ext^{*}_{X^n,\mathfrak{S}_{n-1}}(\mathrm{pr}_1^{*}E,\mathrm{pr}_1^{*}F)\right)^{\It}\oplus \left( \Ext^{*}_{X^n,\mathfrak{S}_{n-2}}(\mathrm{pr}_1^{*}E,\mathrm{pr}_2^{*}F)\right)^{\It}
\end{align*}
Here the first isomorphism is \eqref{isofunc}. The second isomorphism uses that $\Psi$ is an equivalence and \eqref{psiinv}. The last isomorphism can be extracted from \cite[Proposition 4.1]{krug4}.

We look at the first summand, the second working similarly. First note that
\begin{equation*}
\mathrm{pr}_1^{*}E=E\,\boxtimes\,\OO_X\,\boxtimes\,\cdots\,\boxtimes\,\OO_X. 
\end{equation*}
Applying the K\"unneth formula shows
\begin{align*}
\Ext^{*}_{X^n,\mathfrak{S}_{n-1}}(\mathrm{pr}_1^{*}E,\mathrm{pr}_1^{*}F)&=\Ext^{*}_{X^n,\mathfrak{S}_{n-1}}(E\,\boxtimes\,\OO_X\,\boxtimes\,\cdots\,\boxtimes\,\OO_X,F\,\boxtimes\,\OO_X\,\boxtimes\,\cdots\,\boxtimes\,\OO_X)\\
&\cong \left( \Ext^{*}_X(E,F)\otimes \mathrm{H}^{*}(X,\OO_X)^{\otimes n-1}\right)^{\mathfrak{S_{n-1}}}
\end{align*}
But the group $\mathbb{Z}/2\mathbb{Z}$ acts on sheaves of the form $\mathrm{pr}_1^{*}E$ by definition of $\It$ as
\begin{equation*}
\left( \It\right)^{*} \mathrm{pr}_1^{*}E=\iota^{*}E\,\boxtimes\,\iota^{*}\OO_X\,\boxtimes\,\cdots\,\boxtimes\,\iota^{*}\OO_X
\end{equation*}
that is simply by the pullback via $\iota$ on each factor in the box product. Since the action of $\mathbb{Z}/2\mathbb{Z}$ via $\It$ and the $\mathfrak{S}_n$ action commute we finally see that:
\begin{equation*}
\left( \Ext^{*}_{X^n,\mathfrak{S}_{n-1}}(\mathrm{pr}_1^{*}E,\mathrm{pr}_1^{*}F)\right)^{\It}\cong \left(\Ext^{*}_X(E,F)\otimes S^{n-1}\mathrm{H}^{*}(X,\OO_X) \right)^{\iota}.\rlap{$\qquad \Box$}
\end{equation*}\phantom\qedhere
\end{proof}

\begin{thm}\label{homext}
Let $(X,\iota)$ be a K3 surface together with a fixed point free involution and let $n\in\mathbb{N}$ be an odd number. If $G,H\in \Coh(S)$ are such that $\pi^{*}G$ and $\pi^{*}H$ have no higher cohomology (here $S=X/\iota$ is the associated Enriques surface), then
\begin{equation*}
\Hom_{\Sn}(\Gn,\Hn)\cong \Hom_S(G,H)\,\,\,\text{and}\,\,\,\Ext^1_{\Sn}(\Gn,\Hn)\cong \Ext^1_S(G,H).
\end{equation*}
\end{thm}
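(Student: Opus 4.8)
The plan is to reduce the statement to Lemma~\ref{iotas} combined with \'etale Galois descent for the two double covers $\pi : X\to S$ and $\rho : \Xn\to\Sn$. Set $E:=\pi^{*}G$ and $F:=\pi^{*}H$; these carry canonical $\iota$-linearizations (the descent data recovering $G$ and $H$). By the commutative diagram~\eqref{diag} of Section~\ref{sec2}, the image of $(E,F)$ under the equivariant lift $(-)^{[n]}_{\mathbb{Z}/2\mathbb{Z}}$ is $\En$, resp.\ $\Fo$, equipped with the induced $\In$-linearization, and this $\In$-equivariant sheaf corresponds under the equivalence $\rho^{*}$ to $\Gn$, resp.\ $\Hn$. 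Since $2$ is invertible, $\Hom$'s in the equivariant derived category are the invariant parts of $\Hom$'s below; hence for every $i$
\[
\Ext^{i}_{\Sn}(\Gn,\Hn)\cong\bigl(\Ext^{i}_{\Xn}(\En,\Fo)\bigr)^{\In}
\quad\text{and}\quad
\Ext^{i}_{S}(G,H)\cong\bigl(\Ext^{i}_{X}(E,F)\bigr)^{\iota},
\]
the right-hand isomorphism being part of the standard dictionary between $\Coh(S)$ and $\mathbb{Z}/2\mathbb{Z}$-equivariant sheaves on $X$ recalled in Section~\ref{sec2}.

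Now I would feed $E$ and $F$ into Lemma~\ref{iotas} and read off cohomological degrees $0$ and $1$ on the right-hand side. Since $X$ is a K3 surface, $\mathrm{H}^{1}(X,\OO_X)=0$, so $\mathrm{H}^{*}(X,\OO_X)$ lives in degrees $0$ and $2$; consequently $S^{n-1}\mathrm{H}^{*}(X,\OO_X)$ and $S^{n-2}\mathrm{H}^{*}(X,\OO_X)$ sit in even degrees, are one-dimensional in degree $0$, and vanish in degree $1$. Hence the first summand of Lemma~\ref{iotas} contributes precisely $\bigl(\Hom_X(E,F)\bigr)^{\iota}$ in degree $0$ and $\bigl(\Ext^{1}_X(E,F)\bigr)^{\iota}$ in degree $1$. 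For the second summand I would invoke Serre duality on $X$, where $\omega_X\cong\OO_X$: $\Ext^{j}_X(E,\OO_X)\cong\mathrm{H}^{2-j}(X,E)^{\vee}$, which vanishes for $j=0,1$ because $E=\pi^{*}G$ has no higher cohomology. As every summand of $\Ext^{*}_X(E,\OO_X)\otimes\Ext^{*}_X(\OO_X,F)\otimes S^{n-2}\mathrm{H}^{*}(X,\OO_X)$ of total degree $\le 1$ must put the $\Ext^{*}_X(E,\OO_X)$-factor in degree $0$ or $1$ (the degree-$1$ part of $S^{n-2}\mathrm{H}^{*}(X,\OO_X)$ being zero), this second summand vanishes in degrees $0$ and $1$.

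Putting the two degree computations together with the invariance isomorphisms of the first paragraph gives
\[
\Hom_{\Sn}(\Gn,\Hn)\cong\bigl(\Hom_X(\pi^{*}G,\pi^{*}H)\bigr)^{\iota}\cong\Hom_S(G,H),
\]
and the identical argument in degree $1$ yields $\Ext^{1}_{\Sn}(\Gn,\Hn)\cong\Ext^{1}_S(G,H)$. I expect the only genuinely delicate point to be the bookkeeping of equivariant structures in the first step: one must verify that the $\In$-linearization of $\En$ for which $\bigl(\Ext^{*}_{\Xn}(\En,\Fo)\bigr)^{\In}$ computes $\Ext^{*}_{\Sn}(\Gn,\Hn)$ is exactly the linearization induced from the canonical descent data of $\pi^{*}G$ that enters Lemma~\ref{iotas}, i.e.\ that the descent of $\En$ is $\Gn$ rather than its twist by $\omega_{\Sn}$ (cf.\ Remark~\ref{choices}, where the two choices are fixed consistently). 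This compatibility is precisely what diagram~\eqref{diag} encodes, and it is why the natural input is Krug's formula in its $\In$-invariant form (Lemma~\ref{iotas}) rather than Krug's formula on $\Xn$ directly; apart from this, the proof is the degree count above plus Serre duality on the K3 surface.
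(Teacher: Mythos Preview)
Your proof is correct and follows essentially the same route as the paper: reduce to $\In$-invariants via $\rho^{*}$, apply Lemma~\ref{iotas}, and read off degrees $0$ and $1$ using that $S^{k}\mathrm{H}^{*}(X,\OO_X)$ is concentrated in even degrees while Serre duality kills $\Ext^{0,1}_X(E,\OO_X)$. The only cosmetic difference is that the paper explicitly decomposes $\Ext^{*}_X(E,F)$ and $S^{n-1}\mathrm{H}^{*}(X,\OO_X)$ into $\iota$-eigenspaces (via $\pi_{*}\OO_X\cong\OO_S\oplus\omega_S$), whereas you shortcut this by observing that only the degree-$0$ piece of $S^{n-1}\mathrm{H}^{*}(X,\OO_X)$ matters in degrees $\le 1$; you should perhaps say explicitly that $\iota$ acts trivially on that degree-$0$ piece (since it is $H^0(X,\OO_X)^{\otimes(n-1)}$), so that the diagonal invariants really are $\bigl(\Ext^{i}_X(E,F)\bigr)^{\iota}$ rather than the anti-invariants.
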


\begin{proof}
Define $E:=\pi^{*}G$ and $F:=\pi^{*}H$. It follows from diagram \eqref{diag} that $\En\cong\rho^{*}\Gn$ and $\Fo\cong\rho^{*}\Hn$. We therefore have an isomorphism
\begin{equation*}
\Ext_{\Sn}^{*}(\Gn,\Hn) \cong\left( \Ext_{\Xn}^{*}(\rho^{*}\Gn,\rho^{*}\Hn)\right) ^{\In}\cong \left( \Ext_{\Xn}^{*}(\En,\Fo)\right) ^{\In}.
\end{equation*}
By Lemma \ref{iotas} the last space is isomorphic to 
\begin{equation}\label{suminv}
\left( \Ext^{*}_X(E,F)\otimes S^{n-1}\mathrm{H}^{*}(X,\OO_X)\right)^{\iota}
\oplus \left( \Ext^{*}_X(E,\OO_X)\otimes\mathrm{H}^{*}(X,F)\otimes S^{n-2}\mathrm{H}^{*}(X,\OO_X)\right)^{\iota}.
\end{equation}
We begin investigating the first summand. The natural $\mathbb{Z}/2$-linearization of $\OO_X$ induces an $\mathbb{Z}/2$-linearization on $\pi_{*}\OO_X\cong \OO_S\oplus\omega_S$ given by the generator of $\mathbb{Z}/2$ acting by $+1$ on $\OO_S$ and by $-1$ on $\omega_S$, see for example \cite[Remarks on p.72]{mum}. Hence $\iota$ acts as $+1$ on $\mathrm{H}^0(X,\OO_X)\cong \mathrm{H}^0(S,\OO_S)$ and by $-1$ on $\mathrm{H}^2(X,\OO_X)\cong \mathrm{H}^2(S,\omega_S)$. Furthermore, by the adjunction between $\pi^{*}$ and $\pi_{*}$ together with the projection formula, we get a splitting 
\begin{equation*}
\Ext^{*}_X(E,F) \cong \Ext^{*}_S(G,H)\oplus \Ext^{*}_S(G,H\otimes\omega_S).
\end{equation*}
where $\iota$ acts as $+1$ on the first summand and by $-1$ on the second summand.

Thus writing $\mathrm{H}^{*}(X,\OO_X)=\mathbb{C}[t]/(t^2)$ with $\deg(t)=2$ we get 
\begin{equation*}
S^{n-1}\mathrm{H}^{*}(X,\OO_X)=\mathbb{C}[t]/(t^n),\,\,\,\deg(t)=2
\end{equation*}
and $\iota$ acts as $+1$ on the constants and as $-1$ on $t$.

We can now compute the invariants and find
\begin{equation*}
\left( \Ext^{*}_S(G,H)\otimes\mathbb{C}[t]/(t^n)\right)^{\iota}= \Ext^{*}_S(G,H)\otimes\mathbb{C}[t^2]/(t^n),\,\,\,\deg(t)=2
\end{equation*}
as well as
\begin{equation*}
\left( \Ext^{*}_S(G,H\otimes\omega_S)\otimes\mathbb{C}[t]/(t^n)\right)^{\iota}= \Ext^{*}_S(G,H\otimes \omega_S)\otimes t\mathbb{C}[t^2]/(t^n),\,\,\,\deg(t)=2.
\end{equation*}
Looking at the components in degree zero and one sees
\begin{align*}
&\left( \left( \Ext^{*}_X(E,F)\otimes S^{n-1}\mathrm{H}^{*}(X,\OO_X)\right)^{\iota}\right) _{0}\cong \Hom_S(G,H)\,\,\,\text{as well as}\\
&\left( \left( \Ext^{*}_X(E,F)\otimes S^{n-1}\mathrm{H}^{*}(X,\OO_X)\right)^{\iota}\right) _{1}\cong \Ext^1_S(G,H).
\end{align*}

Next we study the second summand in \eqref{suminv}: since $E$ and $F$ have no higher cohomology we have
\begin{equation*}
\Ext^{*}_X(E,\OO_X)\otimes\mathrm{H}^{*}(X,F)\cong \Ext^2_X(E,\OO_X)\otimes \mathrm{H}^{0}(X,F)
\end{equation*}
which already lives in degree two. As we also have
\begin{equation*}
S^{n-2}\mathrm{H}^{*}(X,\OO_X)=\mathbb{C}[t]/(t^{n-1}),\,\,\,\deg(t)=2,
\end{equation*}
we see that the second summand in \eqref{suminv} can possibly have nontrivial components starting in degrees at least two. Especially for $k\in\left\lbrace 0,1 \right\rbrace$ we find
\begin{equation*}
\left( \left( \Ext^{*}_X(E,\OO_X)\otimes\mathrm{H}^{*}(X,E)\otimes S^{n-2}\mathrm{H}^{*}(X,\OO_X)\right)^{\iota}\right)_k=0.
\end{equation*}
Therefore we must have the desired isomorphisms
\begin{equation*}
\Hom_{\Sn}(\Gn,\Hn)\cong \Hom_S(G,H)\,\,\text{and}\,\,\Ext^1_{\Sn}(\Gn,\Hn)\cong \Ext^1_S(G,H).\rlap{$\qquad \Box$}
\end{equation*}\phantom\qedhere
\end{proof}

\section{Stable sheaves on Enriques manifolds}\label{sec4}
In this section we want to study the slope stability of sheaves of the form $\Fn$ on $\Sn$. For this we first recall the following fact:
let $\varphi: Y \rightarrow Z$ be an \'etale Galois cover with finite Galois group $G$ then there is the following relationship between slopes with respect to $h\in \Amp(Z)$:
\begin{equation}\label{slopes}
\mu_{\varphi^{*}h}(\varphi^{*}F) = \lvert G \rvert\,\mu_h(F).
\end{equation}
Using this fact we can prove the following lemma:

\begin{lem}\label{stability}
Let $E$ be a torsion free coherent sheaf on $Y$, slope stable with respect to $\varphi^{*}h$ for some $h\in \Amp(Z)$. If $E$ descends to $Z$, that is $E\cong\varphi^{*}F$, then $F$ is slope stable with respect to $h$.
\end{lem}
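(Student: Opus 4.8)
The plan is to argue by contradiction: a destabilizing subsheaf of $F$ pulls back, along the exact and faithful functor $\varphi^{*}$, to a destabilizing subsheaf of $E=\varphi^{*}F$, contradicting the hypothesis on $E$.

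First I would record two preliminary observations. The first is that $F$ is automatically torsion free, so that slope stability of $F$ is meaningful: since $\varphi$ is \'etale, hence faithfully flat, the functor $\varphi^{*}$ is exact and faithful, and if $T\subseteq F$ denotes the torsion subsheaf then $\varphi^{*}T\subseteq \varphi^{*}F=E$ is again torsion (its support is the preimage of $\supp(T)$, a proper closed subset), so $\varphi^{*}T=0$ and therefore $T=0$. The second is that $\rk(\varphi^{*}F')=\rk(F')$ for every coherent sheaf $F'$ on $Z$, since over the generic point $\varphi$ is a finite field extension; in particular $\rk(E)=\rk(F)$.

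Now suppose $F$ is not slope stable with respect to $h$. By definition there is a subsheaf $F'\subseteq F$ with $0<\rk(F')<\rk(F)$ and $\mu_{h}(F')\geq \mu_{h}(F)$. Applying $\varphi^{*}$ yields an inclusion $\varphi^{*}F'\hookrightarrow E$ with $0<\rk(\varphi^{*}F')=\rk(F')<\rk(F)=\rk(E)$, and using \eqref{slopes} twice,
\[
\mu_{\varphi^{*}h}(\varphi^{*}F')=\lvert G\rvert\,\mu_{h}(F')\geq \lvert G\rvert\,\mu_{h}(F)=\mu_{\varphi^{*}h}(\varphi^{*}F)=\mu_{\varphi^{*}h}(E),
\]
which contradicts the slope stability of $E$ with respect to $\varphi^{*}h$. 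Hence $F$ is slope stable. I do not anticipate a genuine obstacle here: the whole argument is a direct transport of (in)stability through the finite flat cover, and the only points that need a word of justification are the torsion-freeness of $F$ and the invariance of rank under $\varphi^{*}$, both of which are standard.
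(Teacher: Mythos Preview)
Your argument is correct and follows the same route as the paper: pull back a destabilizing subsheaf along $\varphi^{*}$ and use \eqref{slopes} to contradict the stability of $E$. The paper presents it directly rather than by contradiction, but the content is identical; your additional remarks on the torsion-freeness of $F$ and the invariance of rank under $\varphi^{*}$ simply make explicit what the paper leaves tacit.
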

\begin{proof}
Let $H\subset F$ be a subsheaf of $F$. Then $\varphi^{*}H$ is a subsheaf of $\varphi^{*}F\cong E$. Since $E$ is slope stable with respect to $\varphi^{*}h$ we have 
\begin{equation*}
\mu_{\varphi^{*}h}(\varphi^{*}H) < \mu_{\varphi^{*}h}(E)=\mu_{\varphi^{*}h}(\varphi^{*}F)
\end{equation*}
which by \eqref{slopes} implies
\begin{equation*}
\mu_h(H)<\mu_h(F).
\end{equation*}
Hence $F$ is slope stable with respect to $h$.
\end{proof}

For the rest of this section we let $(X,\iota)$ be a K3 surface together with a fixed point free involution $\iota$. We denote the associated Enriques surface by $S$.

To prove the main theorem in this section we need the following isomorphism:
\begin{equation*}
\NS(\Xn)\cong\NS(X)_n\oplus \mathbb{Z}\delta.
\end{equation*}

\begin{rem}
   The summand $\NS(X)_n$ is constructed as follows: take $d\in \NS(X)$ and consider the element 
\begin{equation*}
    D^n:=\sum\limits_{i=1}^n \mathrm{pr}_i^{*}d\in \NS(X^n).
\end{equation*}
This element is $\mathfrak{S}_n$-invariant and thus descends to the symmetric product $S^nX$ by \cite[Lemma 6.1]{fog2}. More exactly, there is an element $D_{n}\in \NS(S^nX)$ such that $\nu^{*}D_{n}=D^n$ for the quotient map $\nu: X^n\rightarrow S^nX$. Then we define $d_n:=\mu^{*}D_n$, where $\mu:\Xn\rightarrow S^nX$ is the Hilbert - Chow morphism. 
\end{rem}

By \cite[Section 3]{sarti} the involution $\In$ acts on $\NS(X)_n$ via:
\begin{equation}\label{iondiv}
\left(\In \right)^{*}(d_n)=\left(\iota^{*}d \right)_n. 
\end{equation}

We are now ready to prove the main result of this section:

\begin{thm}\label{Fnstb}
 Assume $E\in \Coh(X)$ satisfies $E^{**}\not\cong \OO_X$, is torsion free and slope stable with respect to $h=\pi^{*}d$ for some $d\in \Amp(S)$. If $E$ descends to $S$, that is $E\cong \pi^{*}F$ for some $F\in \Coh(S)$, then the induced torsion free sheaf $\Fn$ is slope stable with respect to some ample divisor $D$ on $\Sn$. 
\end{thm}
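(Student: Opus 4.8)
The plan is to transport the stability of $E\cong\pi^*F$ on $X$ up to $\Xn$ using the results of Section~\ref{sec1}, then descend it back down to $\Sn$ using Lemma~\ref{stability}, and finally check that the ample class on $\Xn$ produced by Stapleton's argument can be chosen to be $\In$-invariant so that it actually comes from an ample class on $\Sn$. Concretely, I would proceed as follows.

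First, since $E$ is torsion free, slope stable with respect to $h=\pi^*d$, and $E^{**}\not\cong\OO_X$, the first Lemma of Section~\ref{sec1} applies: there is $H\in\Amp(\Xn)$ such that $\En$ is slope stable with respect to $H$. Moreover, since $E\cong\pi^*F$ is $\iota$-invariant (indeed $G$-equivariant), $\En$ is $\In$-invariant by Lemma~\ref{commute}, and in fact descends: $\En\cong\rho^*\Fn$ by Theorem~\ref{desce} and diagram~\eqref{diag}. The one thing that needs care is that $H$ can be taken $\In$-invariant. Here I would inspect the proof of the first Lemma of Section~\ref{sec1}: the ample class there arises from Stapleton's construction applied to the \emph{locally free} stable sheaf $E^{**}$, and Stapleton's class has the shape $h_n - \tfrac{1}{t}\delta$ (a large multiple of the nef class $h_n$ induced from $h$, minus a small multiple of the half-diagonal $\delta$). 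Since $h=\pi^*d$ we have $\left(\In\right)^*(h_n)=\left(\iota^*h\right)_n=h_n$ by \eqref{iondiv} and the projection formula $\iota^*\pi^*d=\pi^*d$, and $\delta$ is manifestly $\In$-invariant (the exceptional divisor of the Hilbert--Chow morphism is preserved by any automorphism of $X$ acting on $\Xn$). Hence the class $H$ can be chosen to be $\In$-invariant.

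Once $H\in\Amp(\Xn)$ is $\In$-invariant, it descends: there is $D\in\NS(\Sn)$ with $\rho^*D=H$, and $D$ is ample because $\rho$ is finite (ampleness descends along finite surjective morphisms — equivalently, $D$ is ample by Nakai--Moishezon since its pullback is). Now apply Lemma~\ref{stability} with $\varphi=\rho$, $Y=\Xn$, $Z=\Sn$: $\En=\rho^*\Fn$ is torsion free and slope stable with respect to $\rho^*D=H$, so $\Fn$ is slope stable with respect to $D$. It remains only to note that $\Fn$ is itself torsion free: this follows because $\rho^*\Fn\cong\En$ is torsion free and $\rho$ is faithfully flat, so torsion in $\Fn$ would pull back to torsion in $\En$.

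The main obstacle is the $\In$-invariance of the ample class $H$. Everything else is a formal application of the descent equivalence and Lemma~\ref{stability}. If one did not want to open up Stapleton's construction, an alternative would be to average: replace $H$ by $H+\left(\In\right)^*H$, which is still ample and is now $\In$-invariant; one then needs to know that $\En$ remains slope stable with respect to this averaged class. Since $\left(\In\right)^*H$ makes $\left(\In\right)^*\En\cong\En$ slope stable (pull back the destabilizing-free condition along the automorphism $\In$), and since slope stability with respect to a class is preserved under small perturbations within the ample cone near a class for which it holds — more robustly, $\En$ is slope stable with respect to $h_n-\varepsilon\delta$ for all sufficiently small $\varepsilon>0$ along a suitable segment, and the averaged class lies on such a segment — one concludes stability for the $\In$-invariant class as well. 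Either route works; I would present the first, recording explicitly the shape $H=h_n-\tfrac1t\delta$ of Stapleton's class and citing \eqref{iondiv} for its invariance.
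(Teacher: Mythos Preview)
Your proof is correct and follows essentially the same route as the paper's: show $\En$ is slope stable with respect to some $H\in\Amp(\Xn)$ via Section~\ref{sec1}, verify that $H$ can be taken to descend to an ample class $D$ on $\Sn$, and then apply Lemma~\ref{stability}. The only minor difference lies in how $H$ is shown to descend: the paper cites Stapleton's class in the form $H=h_n+\epsilon A$ for an \emph{arbitrary} ample $A$ and simply chooses $A=\rho^{*}C$ for some $C\in\Amp(\Sn)$, whereas you write $H=h_n-\tfrac{1}{t}\delta$ and argue that $\delta$ is $\In$-invariant---both yield an $\In$-invariant ample class and hence the same conclusion.
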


\begin{proof}
By the results of Stapleton in \cite{staple} and in Section \ref{sec1} we know that for a given slope stable torsion free sheaf $E$ on $X$ with $E^{**}\neq\OO_X$, the associated tautological sheaf $\En$ is slope stable on $\Xn$. 

By Theorem \ref{desce} the sheaf $\En$ descends to $\Sn$ if and only if $E$ descends to $S$. In this case $\En\cong \rho^{*}\Fn$. Now by Theorem \ref{stability} the sheaf $\Fn$ is slope stable with respect to some $D\in \Amp(\Sn)$ if $\En$ is slope stable with respect to $H\in \Amp(\Xn)$ of the form $H=\rho^{*}D$ for some $D\in \Amp(\Sn)$.

To see that we find such a $D\in \Amp(\Sn)$, we note that the divisor $H$ is described quite explicitly in \cite[Proposition 4.8]{staple}: it is of the form 
\begin{equation*}
H=h_n+\epsilon A
\end{equation*}
for an \emph{arbitrary} ample divisor $A$ on $\Xn$ and $\epsilon$ sufficiently small. We choose $A$ of the form $A=\rho^{*}C$ for some $C\in \Amp(\Sn)$. By \eqref{iondiv} we also have
\begin{equation*}
\left( \In\right)^{*}\left( h_n\right)= (\iota^{*}h)_n  = \left(\iota^{*}\pi^{*}d \right)_n =  \left(\pi^{*}d \right)_n=h_n
\end{equation*}
which implies that we must have that $h_n=\rho^{*}B$ for some divisor $B$ on $\Sn$. Putting both facts together shows
\begin{equation*}
H=\rho^{*}D\,\,\,\text{for}\,\,\, D=B+\epsilon C.
\end{equation*}
It remains to see that $D$ is ample. But since $\rho$ is finite and surjective $D$ is ample if and only if $\rho^{*}D=H$ is ample, see \cite[Proposition I.4.4]{hart}.
\end{proof}

In the rest of this section we want to study the moduli spaces containing the slope stable sheaves $F$ on $S$ and $\Fn$ on $\Sn$. For this we let $v\in \mathrm{H}^{*}_{\mathrm{alg}}(S,\mathbb{Z})$ be a Mukai vector on $S$, that is $v=\operatorname{ch}(F)\sqrt{\operatorname{td}(S)}$ for some $F\in \Coh(S)$. Here
\begin{equation*}
\mathrm{H}^{*}_{\mathrm{alg}}(S,\mathbb{Z})=\mathrm{H}^0(S,\mathbb{Z})\oplus \mathrm{Num}(S)\oplus \frac{1}{2}\mathbb{Z}\xi_S.
\end{equation*}
where $\xi_S$ denotes the fundamental class of $S$.

We begin with the following result:
\begin{thm}\label{stab2}
Let $F$ be a torsion free coherent sheaf with $F\not\cong F\otimes\omega_S$. If $F$ is slope stable with respect to $d\in \Amp(S)$, $F^{**}\not\cong \OO_S$ and $F^{**}\not\cong \omega_S$, then $\Fn$ is a slope stable torsion free coherent sheaf on $\Sn$.
\end{thm}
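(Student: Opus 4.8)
The plan is to reduce the statement to the case already treated in Theorem \ref{Fnstb} by passing to the K3 cover, using the descent machinery built in Section \ref{sec2}. First I would pull $F$ back to $X$ and set $E := \pi^{*}F$. Since $\pi$ is \'etale, pullback preserves torsion-freeness, so $E$ is torsion free; moreover $E^{**} \cong \pi^{*}(F^{**})$, and the hypotheses $F^{**} \not\cong \OO_S$ and $F^{**} \not\cong \omega_S$ guarantee precisely that $E^{**} \not\cong \OO_X$, because $\pi^{*}\OO_S \cong \OO_X \cong \pi^{*}\omega_S$ and $\pi^{*}$ is faithful on the relevant Picard classes (equivalently: the fiber of $\pi^{*}$ over $\OO_X$ consists exactly of $\{\OO_S, \omega_S\}$).

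Next I would check that $E$ is slope stable with respect to $h := \pi^{*}d$. This is where the condition $F \not\cong F \otimes \omega_S$ enters: it says $F$ is not $\iota$-invariant up to the nontrivial character, so $F$ is a \emph{simple} sheaf that descends, hence by Proposition \ref{desc} (or directly) $E = \pi^{*}F$ is simple, and in particular $E$ has no proper subsheaf of the same slope splitting off a destabilizing piece. More carefully, slope stability of $F$ with respect to $d$ combined with $F \not\cong F\otimes \omega_S$ forces slope stability of $\pi^{*}F$ with respect to $\pi^{*}d$: if $E$ were strictly semistable, a maximal destabilizing (or a Jordan--H\"older) subsheaf would be permuted by $\iota$, and its $\iota$-orbit would descend to a subsheaf of $F$ contradicting stability of $F$ — here one needs $F \not\cong F\otimes\omega_S$ to rule out the subsheaf being preserved only up to the character. (This is the standard Galois-descent argument for stability under an \'etale double cover; it is the analogue of Lemma \ref{stability} run in the other direction.)

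With $E$ torsion free, slope stable with respect to $h = \pi^{*}d$, and $E^{**} \not\cong \OO_X$, Theorem \ref{Fnstb} applies verbatim once we know $E$ descends to $S$ — which it does, by construction, since $E = \pi^{*}F$. The conclusion of Theorem \ref{Fnstb} is exactly that $\Fn$ is a torsion free sheaf on $\Sn$ which is slope stable with respect to some ample divisor $D$ on $\Sn$. (Torsion-freeness of $\Fn$ also follows independently: $\rho^{*}\Fn \cong \En$ and $\En$ is torsion free on $\Xn$ by the Lemma after Stapleton's theorem, since $\En$ agrees in codimension one with the locally free sheaf $(E^{**})^{[n]}$; and $\rho^{*}$ detects torsion-freeness as $\rho$ is \'etale.)

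The main obstacle is the bookkeeping in the second paragraph: verifying that all three genericity hypotheses on $F$ ($F \not\cong F\otimes\omega_S$, $F^{**}\not\cong\OO_S$, $F^{**}\not\cong\omega_S$) translate into the two hypotheses on $E$ that Theorem \ref{Fnstb} requires ($E$ torsion free slope stable with $E^{**}\not\cong\OO_X$, descending to $S$), and in particular the claim that slope stability of $F$ descends upward to $E$. The subtlety is that $\pi^{*}$ of a stable sheaf need not be stable in general — it is only polystable-ish — and it is genuinely the condition $F\not\cong F\otimes\omega_S$ that restores stability of $\pi^{*}F$; making that implication precise (identifying the potential destabilizing subsheaves of $\pi^{*}F$ with $\iota$-orbits of subsheaves of $F$ and handling the character-twisted fixed case) is the one step that deserves a careful, if short, argument rather than a one-line citation.
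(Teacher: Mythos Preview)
Your overall approach is the same as the paper's: set $E:=\pi^{*}F$, verify that $E$ is torsion free and slope stable with respect to $h=\pi^{*}d$ with $E^{**}\not\cong\OO_X$, and then invoke Theorem~\ref{Fnstb}. The reduction of $E^{**}\not\cong\OO_X$ to the two hypotheses on $F^{**}$ is exactly right.

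Where you diverge from the paper is in the argument that $E=\pi^{*}F$ is slope stable, and here your write-up wobbles. Your appeal to Proposition~\ref{desc} is misplaced: that proposition produces a descent $Y\to Z$ from a simple $G$-invariant sheaf on $Y$; it says nothing about simplicity being inherited upward along $\pi^{*}$. Your ``more careful'' Jordan--H\"older/$\iota$-orbit sketch is salvageable, but as written it does not handle the crucial case where the $\iota$-orbit of a destabilizing summand is all of $E$ (i.e.\ $E\cong E_1\oplus\iota^{*}E_1$ with $E_1\not\cong\iota^{*}E_1$), which is precisely the case one must tie back to $F\cong F\otimes\omega_S$. The paper sidesteps this entirely with a two-line argument: from the adjunction/projection-formula isomorphism
\[
\Hom_X(E,E)\cong \Hom_S(F,F)\oplus\Hom_S(F,F\otimes\omega_S),
\]
stability of $F$ gives $\Hom_S(F,F)=\mathbb{C}$, and stability together with $F\not\cong F\otimes\omega_S$ gives $\Hom_S(F,F\otimes\omega_S)=0$; hence $E$ is simple. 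Then one cites \cite[Lemma~3.2.3]{lehn} to get that $E$ is slope polystable with respect to $\pi^{*}d$, and simple $+$ polystable $\Rightarrow$ stable. This is both shorter and avoids the orbit bookkeeping you flagged as the main obstacle.
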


\begin{proof}
The assumptions imply that $F$ is simple and that $\Hom_S(F,F\otimes\omega_S)=0$. Hence $E:=\pi^{*}F$ is is simple due to the formula
\begin{equation*}
\Hom_X(E,E)\cong \Hom_S(F,F)\oplus\Hom_S(F,F\otimes\omega_S).
\end{equation*}
By \cite[Lemma 3.2.3]{lehn}, the sheaf $E$ is polystable with respect to $h=\pi^{*}d$. Being simple and polystable, E is stable.

Since $E^{**}\not\cong \OO_X$ the sheaf $\En$ is slope stable with respect to some $H\in \Amp(\Xn)$ and descends to $\Sn$ via $\En\cong \rho^{*}\Fn$. Now Theorem \ref{Fnstb} implies that $\Fn$ is slope stable with respect to some $D\in \Amp(\Sn)$ satisfying $\rho^{*}D=H$.
\end{proof}

\begin{rem}
    Every torsion free coherent sheaf $F$ of odd rank satisfies the condition $F\not\cong F\otimes\omega_S$.
\end{rem}

Assume from now on, that $S$ is an unnodal Enriques surface, that is $S$ contains no smooth rational curves (that is no $(-2)$-curves). Note that in the moduli space of Enriques surfaces, a very general element will be unnodal by \cite[Corollary 5.7]{nami}.

Denote the moduli space of slope semistable sheaves (with respect to $d\in \Amp(S)$) with Mukai vector $v$ on $S$ by $\mathrm{M}_{S,d}(v)$. Assume that $v$ is primitive and chosen such that every slope semistable sheaf is slope stable and the rank of $v$ is odd. Then for a generic choice of $d\in \Amp(S)$ the moduli space $\mathrm{M}_{S,d}(v)$ is smooth of dimension $v^2+1$ and $\mathrm{M}_{S,d}(v)\neq \emptyset$ if and only if $v^2\geqslant -1$, see \cite[Proposition 4.2, Theorem 4.6 (i)]{yoshi}.

Furthermore in this situation there is a decomposition 
\begin{equation}\label{decomp}
\mathrm{M}_{S,d}(v)=\mathrm{M}_{S,d}(v,L_1)\coprod \mathrm{M}_{S,d}(v,L_2)
\end{equation}
where $\mathrm{M}_{S,d}(v,L_i)$ contains those $[E]\in \mathrm{M}_{S,d}(v)$ with $\det(E)=L_i$ where $L_2=L_1\otimes\omega_S$, that is $\operatorname{c}_1=\operatorname{c}_1(L_1)=\operatorname{c}_1(L_2)\in \mathrm{Num}(S)$. By \cite[Theorem 4.6.(ii)]{yoshi} for a general choice of $d\in \Amp(S)$ the moduli space $\mathrm{M}_{S,d}(v,L)$ is irreducible, that is a smooth projective variety.

We also assume that the Mukai vector is chosen such that for all $[F]\in \M_{S,d}(v,L)$ the sheaf $F$ is locally free on $S$ and does not have higher cohomology. Denote the Mukai vector of the associated sheaf $\Fn$ on $\Sn$ by $v_{[n]}$. If $\Fn$ is slope stable with respect to some $D\in \Amp(\Sn)$, denote its moduli space by $\mathcal{M}_{\Sn,D}(v_{[n]})$.

\begin{prop}\label{oned}
If $v\neq v(\OO_S)=v(\omega_S)$ then there is a class $D\in \Amp(\Sn)$ such that $\Fn$ is slope stable with respect to $D$ for all $[F]\in \M_{S,d}(v,L)$.
\end{prop}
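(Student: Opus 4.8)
The plan is to pull everything back to the Hilbert scheme $\Xn$ along $\pi$, produce there a single ample class that makes all the bundles $\En$ slope stable simultaneously by the method of Proposition \ref{oneh}, and then descend that class along $\rho$ as in the proof of Theorem \ref{Fnstb}. Compared with Theorem \ref{Fnstb}, the one new point is that the perturbation parameter has to be chosen uniformly over $\M_{S,d}(v,L)$, which is legitimate because that moduli space is projective, hence compact.

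First I would fix $[F]\in\M_{S,d}(v,L)$ and set $E:=\pi^{*}F$. By the standing assumptions $F$ is locally free, slope stable with respect to $d$, of odd rank, and the hypothesis $v\neq v(\OO_S)=v(\omega_S)$ forces $F\not\cong\OO_S$ and $F\not\cong\omega_S$; in particular $F\not\cong F\otimes\omega_S$, $F^{**}=F\not\cong\OO_S$ and $F^{**}\not\cong\omega_S$, so Theorem \ref{stab2} applies: $E$ is a simple locally free sheaf, slope stable with respect to $h:=\pi^{*}d$, with $E^{**}=E\not\cong\OO_X$. Since $\operatorname{ch}(E)=\pi^{*}\operatorname{ch}(F)$ is independent of $F$, all the sheaves $\pi^{*}F$ have the same numerical invariants, and as $[F]$ runs over $\M_{S,d}(v,L)$ they form a proper family of slope stable locally free sheaves on $X$, none isomorphic to $\OO_X$, parametrized by the projective variety $\M_{S,d}(v,L)$. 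Moreover $\En\cong\rho^{*}\Fn$ for each such $F$ by diagram \eqref{diag}.

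Next I would run the argument behind Proposition \ref{oneh}, i.e.\ that of \cite[Proposition 2.9]{rz}, for this family: using semismallness of the Hilbert--Chow morphism, flatness of $q\colon\ZZ\to X$, and Stapleton's theorem \cite[Theorem 1.4]{staple} that each $\En$ is slope stable with respect to the nef class $h_{n}$, one obtains an $\epsilon_{0}>0$ such that for an arbitrary ample divisor $A$ on $\Xn$ and every $0<\epsilon\leq\epsilon_{0}$ the bundle $\En$ is slope stable with respect to $h_{n}+\epsilon A$ for \emph{all} $[F]\in\M_{S,d}(v,L)$ at once. I would then take $A=\rho^{*}C$ with $C\in\Amp(\Sn)$; since $h_{n}$ is $\In$-invariant by \eqref{iondiv}, we have $h_{n}=\rho^{*}B$ for some divisor $B$ on $\Sn$, so $H:=h_{n}+\epsilon A=\rho^{*}D$ with $D:=B+\epsilon C$. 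As $\rho$ is finite and surjective and $H$ is ample, $D$ is ample by \cite[Proposition I.4.4]{hart}. Finally, for every $[F]\in\M_{S,d}(v,L)$ the sheaf $\En\cong\rho^{*}\Fn$ is slope stable with respect to $\rho^{*}D$, so Lemma \ref{stability} gives that $\Fn$ is slope stable with respect to $D$; since $D$ was produced independently of $F$, this proves the proposition.

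The step I expect to be the main obstacle is the uniformity of $\epsilon_{0}$: one must bound the potential destabilizing subsheaves of $\En$ with respect to $h_{n}+\epsilon A$ for small $\epsilon$, and control them uniformly as $F$ varies over $\M_{S,d}(v,L)$ --- exactly the boundedness and semicontinuity argument of \cite[Proposition 2.9]{rz}, and the point where properness of $\M_{S,d}(v,L)$ enters. Once that is granted, descending $H$ to $\Sn$ is the same bookkeeping already carried out in Theorem \ref{Fnstb}, and the remainder is formal.
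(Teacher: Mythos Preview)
Your proposal is correct and follows essentially the same route as the paper: pull back to $X$, invoke the argument behind Proposition~\ref{oneh} (i.e.\ \cite[Proposition~2.9]{rz}) to get one $H=h_n+\epsilon A$ working for all $\En$ simultaneously, and then descend $H$ to $D\in\Amp(\Sn)$ exactly as in Theorem~\ref{Fnstb}. If anything, you are slightly more careful than the paper, which simply cites Proposition~\ref{oneh} without checking that the $\pi^{*}F$ fill out an entire moduli space $\mathrm{M}_{X,h}(r,\operatorname{c}_1,\operatorname{c}_2)$ satisfying its hypotheses; your direct appeal to the \cite{rz} argument for the proper family over $\mathrm{M}_{S,d}(v,L)$ sidesteps that issue.
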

\begin{proof}
Since all sheaves classified by $\mathrm{M}_{S,d}(v,L)$ are locally free on $S$, so are all the $E=\pi^{*}F$ on $X$. Proposition \ref{oneh} shows that there is one $H\in \Amp(\Xn)$ such that all $\En$ are slope stable with respect to $H$ since $E\not\cong\OO_X$. But then by the construction of $D\in \Amp(\Sn)$ with $H=\rho^{*}D$ in Theorem \ref{Fnstb}, it follows that there is one such desired $D$.
\end{proof}

We have the following corollary:

\begin{cor}
If $v\neq v(\OO_S)=v(\omega_S)$, then functor $(-)_{[n]}$ induces a morphism
\begin{equation*}
(-)_{[n]}: \mathrm{M}_{S,d}(v,L) \rightarrow \mathcal{M}_{\Sn,D}(v_{[n]}),\, [F]\mapsto [\Fn]
\end{equation*}
which identifies $\mathrm{M}_{S,d}(v,L)$ with a smooth connected component of $\mathcal{M}_{\Sn,D}(v_{[n]})$.
\end{cor}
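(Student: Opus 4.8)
The plan is to transpose the proof of Theorem~\ref{compo} from the pair $(X,\Xn)$ to the pair $(S,\Sn)$, with $(-)^{[n]}$ replaced by its descent $(-)_{[n]}$. First I would check that $[F]\mapsto[\Fn]$ is a well-defined set map into $\mathcal{M}_{\Sn,D}(v_{[n]})$: since $v\neq v(\OO_S)=v(\omega_S)$ and every $F\in\M_{S,d}(v,L)$ is locally free, the pullback $E:=\pi^{*}F$ is locally free with $E^{**}=E\not\cong\OO_X$, so Proposition~\ref{oned} furnishes a single $D\in\Amp(\Sn)$ for which every $\Fn$ is slope stable with respect to $D$, and its Mukai vector is $v_{[n]}$ by construction. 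To upgrade this to a morphism I would use the description $(-)_{[n]}=p_{\Sn*}(p_S^{*}(-))$ of Theorem~\ref{descr}: this integral functor, being the relative version of $(-)^{[n]}$ (cf. \cite[Proposition~2.1]{krug5}) descended through the cartesian squares \eqref{diag2}, sends a flat family of sheaves on $S$ over a base $T$ to a flat family on $\Sn$ over $T$. As slope stability is open in flat families and Proposition~\ref{oned} gives $D$-stability over all of $\M_{S,d}(v,L)$, applying this to the universal family yields a classifying morphism $(-)_{[n]}:\M_{S,d}(v,L)\to\mathcal{M}_{\Sn,D}(v_{[n]})$.

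Next I would establish injectivity on closed points. If $\Fn\cong\Gn$ for $[F],[G]\in\M_{S,d}(v,L)$, then pulling back along $\rho$ and using the commutative diagram \eqref{diag} gives $(\pi^{*}F)^{[n]}\cong(\pi^{*}G)^{[n]}$, hence $\pi^{*}F\cong\pi^{*}G$ by \cite[Theorem~1.2]{krug2}. Applying $\pi_{*}$ and the projection formula with $\pi_{*}\OO_X\cong\OO_S\oplus\omega_S$ yields $F\oplus(F\otimes\omega_S)\cong G\oplus(G\otimes\omega_S)$; all four sheaves here are stable, hence indecomposable, and $F\not\cong F\otimes\omega_S$ because $v$ has odd rank, so the Krull--Schmidt theorem forces $F\cong G$ or $F\cong G\otimes\omega_S$. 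The latter cannot occur, since $\det(G\otimes\omega_S)=L\otimes\omega_S\neq L=\det F$; hence $F\cong G$.

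Finally I would compare tangent spaces and conclude exactly as in Theorem~\ref{compo}. Since $E=\pi^{*}F$ is locally free without higher cohomology (by the running hypotheses on $v$), Theorem~\ref{homext} applied to $G=H=F$ gives $\Ext^1_{\Sn}(\Fn,\Fn)\cong\Ext^1_S(F,F)$, whence
\begin{equation*}
\dim T_{[\Fn]}\mathcal{M}_{\Sn,D}(v_{[n]})=\dim T_{[F]}\M_{S,d}(v,L)=v^2+1,
\end{equation*}
using that $\M_{S,d}(v,L)$ is a smooth irreducible projective variety of dimension $v^2+1$ by the results of Yoshioka recalled above. Injectivity on closed points together with this dimension equality shows that $\mathcal{M}_{\Sn,D}(v_{[n]})$ is smooth at every point $z$ of the image $Z$ of $(-)_{[n]}$, with $\dim_z\mathcal{M}_{\Sn,D}(v_{[n]})=v^2+1$ there; since $(-)_{[n]}$ is proper (its source is projective) with irreducible image, $Z$ is a closed irreducible subvariety of this dimension contained in the smooth locus, hence a connected component, and $(-)_{[n]}:\M_{S,d}(v,L)\to Z$ is a finite birational morphism onto the normal variety $Z$, therefore an isomorphism.

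I expect the main obstacle to be the first step, namely turning $[F]\mapsto[\Fn]$ into a genuine morphism of moduli schemes: this requires the relative form of $(-)_{[n]}$ — equivalently, the $\mathbb{Z}/2\mathbb{Z}$-equivariant relative $(-)^{[n]}$ descended through \eqref{diag2} — together with openness of slope stability and the uniform polarization $D$ from Proposition~\ref{oned}. Once this is available, the two remaining steps are the Enriques analogues of the corresponding parts of Theorem~\ref{compo}, with Theorem~\ref{homext} taking the place of Krug's $\Ext$-formula on $\Xn$.
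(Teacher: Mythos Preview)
Your proposal is correct and follows the same overall architecture as the paper: use Theorem~\ref{descr} together with the flatness/finiteness of $p_S$ and $p_{\Sn}$ to run Krug's argument from \cite[Proposition~2.1]{krug5} and obtain a genuine morphism, then prove injectivity on closed points, and finally match tangent spaces via the $\Ext^1$-part of Theorem~\ref{homext}.

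The one substantive difference is in the injectivity step. The paper simply invokes the $\Hom$-part of Theorem~\ref{homext}: from $\Hom_{\Sn}(\Fn,\Gn)\cong\Hom_S(F,G)$ one sees that an isomorphism $\Fn\cong\Gn$ produces a nonzero map $F\to G$ between stable sheaves of the same slope, hence an isomorphism. You instead pull back to $\Xn$, apply the reconstruction theorem \cite[Theorem~1.2]{krug2} to get $\pi^{*}F\cong\pi^{*}G$, push forward, and use Krull--Schmidt together with the determinant constraint $\det(F)=\det(G)=L$ to rule out $F\cong G\otimes\omega_S$. Both arguments are valid; the paper's is shorter and exploits Theorem~\ref{homext} fully, while yours is independent of the $\Hom$-half of that theorem (and hence of the vanishing-of-higher-cohomology hypothesis for that particular step, though you still need it for $\Ext^1$). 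Your concluding paragraph also spells out the ``smooth connected component'' deduction (properness of the source, equality of dimensions, finite birational onto normal) more carefully than the paper, which leaves this implicit as in Theorem~\ref{compo}.
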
 

\begin{proof} 
We use the explicit description $(-)_{[n]}={p_{\Sn*}}(p_S^{*}(-))$ given by Theorem \ref{descr}.
Since $p_X$ and $p_{\Xn}$ are flat we know by faithfully flat descent for $\pi$ resp. $\rho$ that the induced projections $p_S$ and $p_{\Sn}$ are flat. Similarly since $p_{\Xn}$ is a finite morphism so is $p_{\Sn}$.

Using these facts together with Theorem \ref{stab2} and Proposition \ref{oned} shows that Krug's argument in the proof of \cite[Proposition 2.1]{krug5} also works in this case. Hence $[F]\mapsto [F_{[n]}]$ is a regular morphism. 

Similar to Theorem \ref{compo} it follows from Theorem \ref{homext} that $(-)_{[n]}$ is injective on closed points as $\Hom_{\Sn}(\Fn,\Gn)\cong \Hom_S(F,G)$. By Theorem \ref{homext} we also have 
\begin{equation*}
\dim(\Ext^1_{\Sn}(\Fn,\Fn))=\dim(\Ext_S^1(F,F)).
\end{equation*}
Both facts together imply that $(-)_{[n]}$ identifies $\mathrm{M}_{S,d}(v,L)$ with a smooth connected component of $\mathcal{M}_{\Sn,D}(v_{[n]})$.
\end{proof}

\begin{rem}
    There is a decomposition
    \begin{equation*}
        \mathcal{M}_{\Sn,D}(v_{[n]})=\mathcal{M}_{\Sn,D}(v_{[n]},\mathcal{L}_1)\coprod \mathcal{M}_{\Sn,D}(v_{[n]},\mathcal{L}_2)
    \end{equation*}
    analogous to \eqref{decomp} and, depending on the choice of $(-)_{[n]}$ (see Remark \ref{choices}), $\M_{S,d}(v,L)$ is mapped  to a component of $\mathcal{M}_{\Sn,D}(v_{[n]},\mathcal{L}_1)$ or a component of $\mathcal{M}_{\Sn,D}(v_{[n]},\mathcal{L}_2)$.
\end{rem}

Denote the Mukai vector of $E=\pi^{*}F$ on $X$ by $w$, that is $w=\pi^{*}v$. In the rest of this section we want to study the fixed loci of $\iota^{*}$ in $\mathrm{M}_{X,h}(w)$ and $\left(\In\right)^{*}$ in $\mathcal{M}_{\Xn,H}(w^{[n]})$. In our situation we have a well defined morphism
\begin{equation*}
\pi^{*}: \mathrm{M}_{S,d}(v) \rightarrow \mathrm{M}_{X,h}(w),\,\,\,F\mapsto \pi^{*}F
\end{equation*} 
which has image in $\Fix(\iota^{*})$. More exactly the image of $\pi^{*}$ is the fixed locus of $\iota^{*}$ and the morphism restricts to an \'{e}tale 2:1-morphism
\begin{equation*}
\pi^{*}: \mathrm{M}_{S,d}(v)\rightarrow \Fix(\iota^{*}).
\end{equation*}
Furthermore $\Fix(\iota^{*})$ is a Lagrangian subscheme in $\mathrm{M}_{X,h}(w)$, see for example \cite[Theorem (1)]{kim} or \cite[Theorem 2.3 (c)]{nuer}.

As the morphism $\pi^{*}: \mathrm{M}_{S,d}(v) \rightarrow \Fix(\iota^{*})$ is an \'{e}tale 2:1-morphism, the decomposition \eqref{decomp} shows that $\pi^{*}$ induces an isomorphism $\mathrm{M}_{S,d}(v,L) \cong \Fix(\iota^{*})$. As $\mathrm{M}_{S,d}(v,L)$ is irreducible, so is $\Fix(\iota^{*})$.

\begin{thm}
The fixed locus $\Fix(\iota^{*})$ is a smooth projective variety. The morphism $(-)^{[n]}$ in Theorem \ref{compo} restricts to a morphism
\begin{equation*}
(-)^{[n]}: \Fix(\iota^{*}) \rightarrow \Fix(\left( \In\right) ^{*})
\end{equation*}
which identifies $\Fix(\iota^{*})$ with a smooth connected component of $\Fix(\left( \In\right) ^{*})$.
\end{thm}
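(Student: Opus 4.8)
The plan is to assemble this statement from the pieces already established, so the proof is short and essentially a bookkeeping exercise. First I would record that $\Fix(\iota^{*})$ is a smooth projective variety: we have just observed that $\pi^{*}$ induces an isomorphism $\mathrm{M}_{S,d}(v,L)\cong\Fix(\iota^{*})$, and under the running hypotheses (the remarks preceding Proposition \ref{oned}) $\mathrm{M}_{S,d}(v,L)$ is a smooth projective variety by \cite[Theorem 4.6]{yoshi}. Hence so is $\Fix(\iota^{*})$.

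Next I would check that $(-)^{[n]}$ from Theorem \ref{compo} really does carry $\Fix(\iota^{*})$ into $\Fix(\bigl(\In\bigr)^{*})$. A point of $\Fix(\iota^{*})$ is (up to the $2{:}1$ cover) a sheaf $E=\pi^{*}F$ with $\iota^{*}E\cong E$. By Lemma \ref{commute} we have $\bigl(\In\bigr)^{*}\En\cong(\iota^{*}E)^{[n]}\cong\En$, so $[\En]\in\Fix(\bigl(\In\bigr)^{*})$ inside $\mathcal{M}_{\Xn,H}(w^{[n]})$; this uses that the relevant numerical hypotheses of Theorem \ref{compo} hold for $w=\pi^{*}v$, which follows because the $E=\pi^{*}F$ are locally free with no higher cohomology (the cohomology of $E$ is that of $F\oplus(F\otimes\omega_S)$, both without higher cohomology by assumption). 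So the restriction is well defined, and it is a morphism because $(-)^{[n]}$ is.

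For the final assertion I would invoke Theorem \ref{compo}: that theorem already identifies $\mathrm{M}_{X,h}(w)$ with a smooth connected component $\mathcal{N}\subseteq\mathcal{M}_{\Xn,H}(w^{[n]})$ via $(-)^{[n]}$. Intersecting with the fixed loci and using that $(-)^{[n]}$ is injective on closed points and an isomorphism onto $\mathcal{N}$, the fixed locus $\Fix(\bigl(\In\bigr)^{*})\cap\mathcal{N}$ is exactly the image of $\Fix(\iota^{*})$, and the map $\Fix(\iota^{*})\to\Fix(\bigl(\In\bigr)^{*})\cap\mathcal{N}$ is an isomorphism. It remains to see this intersection is open and closed in $\Fix(\bigl(\In\bigr)^{*})$: closedness is automatic since $\mathcal{N}$ is closed; openness follows because $\mathcal{N}$ is open in $\mathcal{M}_{\Xn,H}(w^{[n]})$ and hence $\mathcal{N}\cap\Fix(\bigl(\In\bigr)^{*})$ is open in $\Fix(\bigl(\In\bigr)^{*})$. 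Smoothness of this component then transfers from the smoothness of $\Fix(\iota^{*})$ along the isomorphism. Since $\Fix(\iota^{*})\cong\mathrm{M}_{S,d}(v,L)$ is irreducible, the component is in fact irreducible.

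The main obstacle, and the only point requiring genuine care, is verifying that $\Fix(\bigl(\In\bigr)^{*})\cap\mathcal{N}$ is genuinely a \emph{union of connected components} of $\Fix(\bigl(\In\bigr)^{*})$ rather than merely a locally closed subset: the openness of $\mathcal{N}$ inside $\mathcal{M}_{\Xn,H}(w^{[n]})$ is what Theorem \ref{compo} gives (a connected component is open and closed), and one must make sure the fixed-locus scheme structure on $\Fix(\bigl(\In\bigr)^{*})$ is compatible with this, i.e. that taking the $\bigl(\In\bigr)^{*}$-fixed locus commutes with passing to an open-and-closed subscheme. This is formal, but it is the step I would write out most carefully; everything else is a direct application of the results already proved.
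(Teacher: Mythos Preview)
Your argument is correct but takes a different route from the paper's own proof. The paper argues \emph{infinitesimally}: it notes that $\Fix(\iota^{*})$ is smooth because $\mathrm{M}_{X,h}(w)$ is, and then matches tangent space dimensions at each point by computing
\[
T_{[E]}\Fix(\iota^{*})\cong\bigl(\Ext^1_X(E,E)\bigr)^{\iota}\cong\Ext^1_S(F,F)
\quad\text{and}\quad
T_{[\En]}\Fix\bigl((\In)^{*}\bigr)\cong\bigl(\Ext^1_{\Xn}(\En,\En)\bigr)^{\In}\cong\Ext^1_S(F,F),
\]
the last isomorphism using Theorem~\ref{homext}. Together with injectivity on closed points this pins down the image as a smooth connected component. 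Your approach instead works \emph{globally}: you use that Theorem~\ref{compo} already gives an isomorphism $\mathrm{M}_{X,h}(w)\xrightarrow{\sim}\mathcal{N}$ onto a clopen $\mathcal{N}$, observe (via Lemma~\ref{commute}) that this isomorphism intertwines $\iota^{*}$ with $(\In)^{*}$, and then pass to fixed loci. This neatly bypasses Theorem~\ref{homext} altogether. The paper's route has the virtue of exhibiting the tangent spaces explicitly (and justifying the work done in Section~\ref{sec3}); yours is more structural and shorter.

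One point you should state more explicitly: the step ``the map $\Fix(\iota^{*})\to\Fix\bigl((\In)^{*}\bigr)\cap\mathcal{N}$ is an isomorphism'' rests on the isomorphism $\mathrm{M}_{X,h}(w)\xrightarrow{\sim}\mathcal{N}$ being $\mathbb{Z}/2\mathbb{Z}$-\emph{equivariant as a morphism of schemes}, not just on closed points. This is what lets you identify the two fixed-locus scheme structures. It does follow from Lemma~\ref{commute}, since that lemma is an isomorphism of functors and hence applies to families (so $(\In)^{*}\circ(-)^{[n]}$ and $(-)^{[n]}\circ\iota^{*}$ agree as morphisms to the moduli space), and one should also note that $(\In)^{*}$ preserves $\mathcal{N}$ (again by Lemma~\ref{commute}, since $\iota^{*}$ preserves $\mathrm{M}_{X,h}(w)$). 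You flag the clopen compatibility as the delicate point, but in fact that part is immediate; the equivariance is the substantive ingredient and deserves a sentence.
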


\begin{proof}
The fixed locus $\Fix(\iota^{*})$ is smooth and projective since $\mathrm{M}_{X,h}(w)$ is smooth and projective. Since it is also irreducible, it is a smooth projective variety. 

By Lemma \ref{commute} the morphism $(-)^{[n]}$ restricts to a morphism between the fixed loci. Since $(-)^{[n]}$ is injective on closed points, so is its restriction to $\Fix(\iota^{*})$. 

To identify $\Fix(\iota^{*})$ as a smooth connected component it is therefore enough to prove
\begin{equation*}
\dim\left( T_{[E]}\Fix(\iota^{*}) \right)=\dim \left(T_{\left[ \En\right] }\Fix(\left(\In \right)^{*} ) \right)  
\end{equation*}
But a general fact says that the tangent space of the fixed locus satisfies
\begin{equation*}
T_y \left( Y^G \right) \cong (T_y Y)^G,
\end{equation*}
see for example \cite[Proposition 3.2]{edix}. As we have $E\cong\pi^{*}F$ for some sheaf $F$ on $S$, this shows
\begin{align*}
T_{[E]}\Fix(\iota^{*})\cong\left(T_{[E]}\mathrm{M}_{X,h}(w) \right)^{\iota}\cong \left( \Ext_X^1(E,E)\right) ^{\iota}\cong \Ext^1_S(F,F).
\end{align*}

A similar computation shows
\begin{equation*}
T_{[\En]}\Fix(\left(\In \right) ^{*})\cong\left( \Ext_{\Xn}^1(\En,\En)\right) ^{\In} \cong \Ext^1_{\Sn}(\Fn,\Fn)\cong \Ext^1_S(F,F)
\end{equation*}
by Theorem \ref{homext} since $\En\cong \rho^{*}\Fn$.
\end{proof}

\begin{cor}
The diagram \eqref{diag} induces the commutative diagram:
 	\begin{equation*}
 		\begin{tikzcd}
 			\Fix(\iota^{*}) \arrow[r,"(-)^{[n]}"] &[1cm]\Fix(\left(\In \right)^{*}) \\[.5cm]
 			\mathrm{M}_{S,d}(v,L) \arrow[r,swap,"(-)_{[n]}"]\arrow[u,"\pi^{*}"]  & \mathcal{M}_{\Sn,D}(v_{[n]})\arrow[u,swap,"\rho^{*}"]
 		\end{tikzcd}.
 	\end{equation*}	
\end{cor}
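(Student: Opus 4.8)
The plan is to deduce the square from the commutativity of the functor diagram \eqref{diag}, once one observes that all four arrows are morphisms of the relevant moduli varieties. Three of them are already available: the identification $\pi^{*}\colon\mathrm{M}_{S,d}(v,L)\cong\Fix(\iota^{*})$ discussed before the preceding theorem, the restriction $(-)^{[n]}\colon\Fix(\iota^{*})\to\Fix((\In)^{*})$ from that theorem, and the morphism $(-)_{[n]}\colon\mathrm{M}_{S,d}(v,L)\to\mathcal{M}_{\Sn,D}(v_{[n]})$ from the preceding corollary. For the right-hand vertical map I would argue exactly as for $\pi^{*}$: pullback along the \'etale double cover $\rho$ sends a $D$-slope stable sheaf $\mathcal{G}$ with $[\mathcal{G}]\in\mathcal{M}_{\Sn,D}(v_{[n]})$ to a $\rho^{*}D$-slope polystable sheaf, which is in fact slope stable because $v_{[n]}$ has odd rank $n\cdot\rk(v)$, so $\mathcal{G}\not\cong\mathcal{G}\otimes\omega_{\Sn}$ (compare Theorem \ref{stab2}); moreover $\rho\circ\In=\rho$ gives $(\In)^{*}\rho^{*}\mathcal{G}\cong\rho^{*}\mathcal{G}$, so $[\rho^{*}\mathcal{G}]\in\Fix((\In)^{*})$. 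Pulling back flat families along $\rho\times\id$ makes this a morphism $\rho^{*}\colon\mathcal{M}_{\Sn,D}(v_{[n]})\to\Fix((\In)^{*})$.

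Next I would check commutativity on closed points, which suffices since $\mathrm{M}_{S,d}(v,L)$ is reduced (indeed smooth) of finite type over $\mathbb{C}$ and $\Fix((\In)^{*})$ is separated, so two morphisms between them agreeing on all closed points coincide. Fix $[F]\in\mathrm{M}_{S,d}(v,L)$ and set $E:=\pi^{*}F$. Going up then right sends $[F]\mapsto[E]\mapsto[\En]$, while going right then up sends $[F]\mapsto[\Fn]\mapsto[\rho^{*}\Fn]$. The outer rectangle of \eqref{diag} is precisely the isomorphism of functors $\rho^{*}\circ(-)_{[n]}\cong(-)^{[n]}\circ\pi^{*}$ from $\Db(S)$ to $\Db(\Xn)$; applied to the sheaf $F$ it gives an isomorphism $\rho^{*}\Fn\cong(\pi^{*}F)^{[n]}=\En$ in $\Db(\Xn)$. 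Both sides sit in degree zero, so this is an isomorphism of sheaves, and both are slope stable with respect to $H=\rho^{*}D$ (by Theorem \ref{Fnstb}, Proposition \ref{oned}, and the stability of $\En$ recalled in Section \ref{sec1}); hence $[\rho^{*}\Fn]=[\En]$ in $\mathcal{M}_{\Xn,H}(w^{[n]})$ and a fortiori in $\Fix((\In)^{*})$. Thus the two composites agree on every closed point, so the square commutes.

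I do not expect a real obstacle here: the substantive work has all been done in the earlier sections and what remains is bookkeeping. The one point worth stating carefully is the passage from the isomorphism of derived functors in \eqref{diag} to an identity of morphisms of moduli spaces --- namely that for $[F]\in\mathrm{M}_{S,d}(v,L)$ the object $\Fn$ is concentrated in degree zero and $\rho^{*}$, $(-)^{[n]}$ are exact, so that every object occurring is a sheaf with the prescribed Mukai vector and the comparison genuinely takes place inside the moduli functor rather than only in $\Db(\Xn)$. With that in hand, the corollary follows by combining Lemma \ref{commute}, Theorem \ref{desce}, Theorem \ref{Fnstb}, Proposition \ref{oned}, the preceding theorem, and the preceding corollary.
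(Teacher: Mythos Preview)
The paper states this corollary with no proof at all; it is offered as an immediate consequence of diagram \eqref{diag} and the constructions in the preceding results. Your argument correctly supplies the details the paper leaves implicit, and your approach is exactly the intended one: verify that each arrow is a well-defined morphism of moduli spaces (the only one not already on the page being $\rho^{*}$), then read off commutativity from the functor isomorphism $\rho^{*}\circ(-)_{[n]}\cong(-)^{[n]}\circ\pi^{*}$ encoded in \eqref{diag}. Your justification of the right-hand vertical map---odd rank $n\cdot\rk(v)$ forces $\mathcal{G}\not\cong\mathcal{G}\otimes\omega_{\Sn}$, hence $\rho^{*}\mathcal{G}$ is simple and therefore stable, exactly as in the proof of Theorem \ref{stab2}---is the right observation, and your reduction to closed points via reducedness and separatedness is a clean way to pass from the derived-category statement to equality of scheme morphisms.

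One small caveat: your argument for well-definedness of $\rho^{*}$ assumes every closed point of $\mathcal{M}_{\Sn,D}(v_{[n]})$ is represented by a slope \emph{stable} sheaf, which the paper never verifies (it only introduces this moduli space rather informally). For the corollary as stated this does not matter, since commutativity of the square is a condition on maps out of $\mathrm{M}_{S,d}(v,L)$, and on that image every $\Fn$ is slope stable by Proposition \ref{oned}; but if you want $\rho^{*}$ to be a morphism on the whole of $\mathcal{M}_{\Sn,D}(v_{[n]})$ you should either note that pullback of slope semistable sheaves along \'etale covers remains slope semistable, or restrict to the stable locus.
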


\section{Acknowledgement}
I thank Andreas Krug for answering my (many) questions regarding \cite{krug4}. I also thank the referee for a very detailed report which improved the paper greatly.

\end{document}